\renewcommand{\PrintDOI}[1]{\href{http://dx.doi.org/\detokenize{#1}}{doi: \detokenize{#1}}%
  \IfEmptyBibField{pages}{, (to appear in print)}{}}
\def\commutatif{\ar@{}[rd]|{\circlearrowleft}}
\newtheorem{thm}{Theorem}[section]
\newtheorem{pro}[thm]{Proposition}
\newtheorem{cor}[thm]{Corollary}
\newtheorem{conj}[thm]{Conjecture}
\theoremstyle{definition}
\newtheorem{df}[thm]{Definition}
\newtheorem{qst}[thm]{Question}
\theoremstyle{remark}
\newtheorem{rmk}[thm]{Remark}
\newtheorem{ex}[thm]{Example}
\newcommand{\R}{\mathbb{R}}
\newcommand{\Z}{\mathbb{Z}}
\newcommand\rt{\triangleright}
\def\a{\alpha}
\title{Ring Theoretic Aspects of Quandles}
\author{Mohamed Elhamdadi} 
\address{Department of Mathematics, 
University of South Florida, Tampa, FL 33620} 
\email{emohamed@math.usf.edu} 
\author{Neranga Fernando} 
\address{Department of Mathematics, 
Northeastern University, Boston MA 02115} 
\email{w.fernando@northeastern.edu} 
\author{Boris Tsvelikhovskiy} 
\address{Department of Mathematics, 
	Northeastern University, Boston MA 02115} 
\email{tsvelikhovskiy.b@husky.neu.edu}
\begin{document}

\maketitle

\begin{abstract}
We associate to every quandle $X$ and an associative ring with
unity $\mathbf{k}$, a \textit{nonassociative} ring $\mathbf{k}[X]$ following \cite{BPS}. The basic properties of such rings are investigated.
In particular, under the assumption that the inner automorphism group $Inn(X)$ acts \textit{orbit $2$-transitively} on $X$,
a complete description of right (or left) ideals is provided. The complete description of right ideals for the dihedral quandles $R_n$
is given. 
It is also shown that if for two quandles $X$ and $Y$ the inner automorphism groups act $2$-transitively and $\mathbf{k}[X]$ is isomorphic to $\mathbf{k}[Y]$,
then the quandles are of the same partition type. However, we provide examples when the quandle rings $\mathbf{k}[X]$ and $\mathbf{k}[Y]$ are isomorphic, but the quandles $X$ and $Y$ are not isomorphic. 
These examples answer some open problems in \cite{BPS}. 
 \end{abstract}

\tableofcontents

\section{Introduction}
Quandles are generally non-associative algebraic structures (the exception being the trivial quandles).  They were introduced independently in the 1980's by Joyce \cite{Joyce} and Matveev \cite{Matveev} with the purpose of constructing invariants of knots in the three space and knotted surfaces in four space.  However, the notion of a quandle can be traced back to the 1940's in the work of Mituhisa Takasaki \cite{Takasaki}.  The three axioms of a quandle algebraically encode the three Reidemeister moves in classical knot theory.  For a recent treatment of quandles (see \cite{EN}).  Joyce and Matveev introduced the notion of the fundamental quandle of a knot and gave a theorem that brings the problem of equivalence of knots to the problem of the quandle isomorphism of their fundamental quandles.  Precisely, two knots $K_1$ and $K_2$ are equivalent (up to reverse and mirror image) if and only if the fundamental quandles $Q(K_1)$ and  $Q(K_2)$ are isomorphic.  But determining isomorphism classes of quandles is a difficult task in general.  Thus the need of restricting oneself to some specific families of quandles such as connected quandles (called also indecomposable), medial and Alexander quandles.  Recall that the fundamental quandles of knots are connected.
Recently, there has been investigations of quandles from algebraic point of views and their relations to other algebraic structures such as Lie algebras \cite{CCES1, CCES2}, Leibniz algebras \cite{Kinyon, KW}, Frobenius algebras and Yang-Baxter equation \cite{CCEKS}, Hopf algebras \cite{AG, CCES2}, transitive groups \cite{Vend}, quasigroups and Moufang loops \cite{Elhamdadi}, ring theory \cite{BPS} etc.  This article will add to this list since we introduce new concepts motivated by ring theory to the theory of quandles.  We follow \cite{BPS} and we associate to every quandle $(X, \rt)$ and an associative ring $\mathbf{k}$ with unity, a \textit{nonassociative} ring $\mathbf{k}[X]$.   Precisely, let $\mathbf{k}[X]$ be the set of elements that are uniquely expressible in the form $\sum_{x \in X }  a_x x$, where $x \in X$ and $a_x=0$ for almost all $x$.  Then the  set $\mathbf{k}[X]$ becomes a ring with the natural addition and the multiplication given by the following, where $x, y \in X$ and $a_x, a_y \in \mathbf{k}$,
\[   ( \sum_{x \in X }  a_x x) \cdot ( \sum_{ y \in X }  b_y y )
=   \sum_{x, y \in X } a_x b_y (x \rt y) . \]
Linearization of quandles appeared in the work on categorical groups and other notions of
categorification in \cite{CCES1} and \cite{CCES2}, where self-distributive structures in the categories of coalgebras, cocommutative coalgebras and Hopf algebras were studied.  Precisely, in studying self-distributivity maps in coalgebras, the authors of \cite{CCES1} gave a broad examples with a focus on the case of $\mathbf{k} \oplus \mathbf{k}[X]$ with the multiplication \[ ( a+ \sum_{x \in X }  a_x x) \cdot ( b+\sum_{ y \in X }  b_y y )
= \sum_{y \in X } a b_y  +  \sum_{x, y \in X } a_x b_y (x \rt y) .\]

In \cite{BPS}, the authors showed that the ring $\mathbf{k}[X]$ gives interesting information on the quandle $X$.  In this article we investigate the basic properties of quandle rings and also solve some of the open problems stated in \cite{BPS}.   In particular, under the assumption that the inner automorphism group $Inn(X)$ acts orbit $2$-transitively on the quandle $X$ and the ring $\mathbf{k}$ is a field of characteristic zero (or a certain semigroup $H_x$ acts $2$-transitively on $X$) a complete description of right (or left) ideals is provided. The corresponding results for fields of positive characteristic are given in Corollary~\ref{RightModDecomp}. The complete description of right ideals of $\mathbf{k}[R_n]$, where $R_n$ is the dihedral quandle of order $n$,  is given. It is also shown that the rings $\mathbf{k}[X]$ are Noetherian, when the quandle $X$ is finite and the ring $\mathbf{k}$ is Noetherian.  We also give an example of a quandle $X$ with $\mathbf{k}[X]$ not Noetherian. These rings are, in general, not domains and neither every right nor left ideal is principal. It is also shown that if for two quandles $X$ and $Y$ the inner automorphism groups act $2$-transitively and $\mathbf{k}[X]$ is isomorphic to $\mathbf{k}[Y]$  (here $\mathbf{k}$ is a field of $char=0$), then the quandles are of the same partition type. However, we provide examples when the quandle rings $\mathbf{k}[X]$ and $\mathbf{k}[Y]$ are isomorphic, but the quandles $X$ and $Y$ are not isomorphic with $\mathbf{k})$ being a field of any charcteristic.

The following is the organization of the article.  In Section~\ref{review}, we recall the basics of quandles with examples.  In Section~\ref{Powerassoc}, we investigate an open question raised in \cite{BPS} concerning the power associativity of non-trivial quandles. Precisely, we prove that quandle rings are never power associative when the quandle is non-trivial and $char(\mathbf{k})\neq 2, 3$.

Section~\ref{properties} deals with various properties of quandle rings.  We show that for a Noetherian ring $\mathbf{k}$ and a finite quandle $X$, the quandle ring $\mathbf{k}[X]$ is both left and right Noetherian ring.  We also give, for any positive integer $n$ and $\mathbf{k}=\mathbb{R} \mbox{ or } \mathbb{C}$, the complete list of simple right ideals of the quandle ring $\mathbf{k}[R_n]$.   In section~\ref{iso}, we investigate the problem of isomorphisms of quandle rings.  We introduce the notion of partition type of quandles and show that if the quandle rings $\mathbf{k}[X]$ and $\mathbf{k}[Y]$ are isomorphic and the quandles $X$ and $Y$ are orbit $2$-transitive, then $X$ and $Y$ are of the same partition type.  Section~\ref{augmentation} deals with the augmentation ideals of  quandle rings.  Precisely we give a solution to conjecture 6.5 in \cite{BPS}.

Throughout the paper, $\mathbf{k}$ always denotes a ring unless specified otherwise. Also, quandle operation and ring operation are denoted by $\rt$ and $\cdot$, respectively. 

\section{Review of Quandles}\label{review}
We start this section by giving the basics of quandles with examples.
\begin{df}\label{quandledef}
A {\it quandle}, $X$, is a set with a binary operation $(a, b) \mapsto  a \rt b$ such that

(I) For any $a \in X$,
$a\rt a =a$.

(II) For any $a,b \in X$, there is a unique $c \in X$ such that
$a= c\rt b$.

(III)
For any $a,b,c \in X$, we have
$ (a \rt b) \rt c=(a\rt c)\rt (b\rt c). $
\end{df}

\noindent A quandle $(X,\rt)$ is said to be commutative if $a\rt b=b\rt a, \forall a,b \in X$.

\noindent A {\it rack} is a set with a binary operation that
satisfies (II) and (III). Racks and quandles have been studied
extensively in, for example, \cite{Joyce,Matveev}.  For more details on racks and quandles see the book \cite{EN}.

The following are typical examples of quandles: 
\begin{itemize}
	
	\item
	Let $X$ be a non empty set.  The binary operation $a \rt b= a, \; \forall a,b \in X,$ defines a quandle operation on $X$ called \emph{trivial quandle.}
\item
A group $G$ with
conjugation as the quandle operation: $a \rt b = b^{-1} a b$,
denoted by $X=$ Conj$(G)$, is a quandle. 

\item
Any subset of $G$ that is closed under such conjugation is also a quandle. More generally if
$G$ is a group, $H$ is a subgroup, and $\sigma$ is an automorphism that
fixes the elements of $H$  ({\it i.e.} $\sigma(h)=h \ \forall h \in
H$), then $G/H$ is a quandle with $\rt $ defined by $Ha\rt
Hb=H \sigma(ab^{-1})b.$ 

\item
Any ${\Z }[t, t^{-1}]$-module $M$ is
a quandle with $a\rt b=ta+(1-t)b$, for $a,b \in M$, and is called
an {\it  Alexander  quandle}. 

\item
Let $n$ be a positive integer, and
for elements  $i, j \in \Z_n$, define $i\rt j = 2j-i \pmod{n}$. Then $\rt$ defines a quandle structure
called the {\it dihedral quandle}, and denoted by $R_n$, that
coincides with the set  of reflections in the dihedral group
with composition given by conjugation.

\item
Any group $G$ with the quandle operation: $a \rt b = ba^{-1}  b$ is a quandle called Core(G). 
\end{itemize}

The notions of quandle homomorphims and automorphisms are clear.  Let $X$ be a quandle, thus the second axiom of Definition~\ref{quandledef} makes any right multiplication by any element $x$, $R_x: y \mapsto y \rt x$, into a bijection .  The third axiom of Definition~\ref{quandledef} makes $R_x$ into a homomorphism and thus an automorphism.  Let $Aut(X)$ denotes the group of all automorphisms of $X$ and let $Inn(X):=<R_x, \; x \in X>$ denotes the subgroup generated by right multiplications.  The quandle $X$ is called \textit{connected} quandle if the group $Inn(X)$ acts transitively on $X$, that is, there is only one orbit.  Later in the paper, in Section~\ref{properties}, we will use the left multiplication in a quandle denoted $L_x: y \mapsto x \rt y$.  In general these maps need not to be bijective.  Quandles in which left multiplications $L_x$ are bijections are called \textit{Latin} quandles.

\section{Power associativity of quandle rings}\label{Powerassoc}
 In \cite{BPS}, power associativity of dihedral quandles was investigated and the question of determining the conditions under which the  quandle ring $R[X]$ is power associative was raised.  In this section we give a complete solution to this question.  Precisely, we prove that quandle rings are never power associative when the quandle is non-trivial and $char(\mathbf{k})\neq 2, 3$.	
  
But first let's recall the following definition from \cite{Albert}.
\begin{df}
	A ring $\mathbf{k}$ in which every element generates an associative subring is called a \textit{power-associative} ring. 
\end{df}

\begin{ex}{\rm
		Any alternative algebra  is power associative.  Recall that an algebra $A$ is called \textit{alternative} if $x \cdot (x \cdot y)= (x \cdot x )\cdot y$ and  $x \cdot (y \cdot y)= (x \cdot y )\cdot y, \forall x, y \in A$,  (for more details see \cite{EM}).  
	} \end{ex}

		It is well known \cite{Albert} that a ring $\mathbf{k}$ of characteristic zero is power-associative if and only if 
		\[(x \cdot x)\cdot x=x\cdot (x \cdot x) \; \textit{and}\; (x \cdot x)\cdot (x \cdot x)=[(x \cdot x)\cdot x] \cdot x,\; \textit{for all}\; x \in \mathbf{k}. \]

\begin{rmk}\label{Erandi1}
It follows from (II) and (III) of Definition~\ref{quandledef} that a quandle $(X,\rt)$ is associative if and only it it is trivial. 
\end{rmk}

\begin{thm}
Let $\mathbf{k}$ be a ring with $char(\mathbf{k})\neq 2, 3$ and $(X,\rt)$ be a non-trivial quandle. Then the quandle ring $\mathbf{k}[X]$ is not power associative.
\end{thm}

\begin{proof}
Clearly, if $(X,\rt)$ is associative, then the quandle ring  $\mathbf{k}[X]$ is power associative. Hence we assume that $(X,\rt)$ is not associative (equivalently not trivial, see Remark~\ref{Erandi1}). Therefore, henceforth in the proof we assume $(X,\rt)$ is a non-trivial quandle.

We prove the following:  
\begin{enumerate}
\item [(a)] If there exist $x, y\in X$ such that $x\neq y$ and $x\rt y = y\rt x$, then $\mathbf{k}[X]$ is not power associative.
\item [(b)] Assume  $ \forall x, y\in X$ such that $x\neq y$, we have  $x\rt y \neq y\rt x$. If $\mathbf{k}[X]$ is power associative, then $x\rt y= x$.
\end{enumerate}

Note that (a) and (b) imply that $(X,\rt)$ is trivial, which contradicts our assumption. 

\par

Choose $x,y \in X$ such that $x\neq y$  and $x\rt y = y\rt x$. Let $u=ax+by$, where $ab \neq 0$. Then 
$$u\cdot u=a^2x+2ab\,(x\rt y)+ b^2y.$$
It is straightforward to see that $(u\cdot u)\cdot u = u\cdot (u\cdot u)$. Now we consider $(u\cdot u)\cdot (u\cdot u)$ and $((u\cdot u)\cdot u)\cdot u$. We have
\begin{equation}\label{NK1}
(u\cdot u)\cdot (u\cdot u)=x\,a^4+4(x\rt (x\rt y))\,a^3b+4(y\rt(x\rt y))\,ab^3+6(x\rt y)\,a^2b^2+y\,b^4
\end{equation}
and 
\begin{equation}\label{NK2}
\begin{split}
((u\cdot u)\cdot u)\cdot u&=x\,a^4+b^4\, y +\big[x\rt y + (x\rt y)\rt x + 2[(x\rt y)\rt x]\rt x\big]\,a^3b\cr
&+\big[y\rt x + (y\rt x)\rt y + 2[(x\rt y)\rt y]\rt y\big]\,ab^3\cr
&+\big[(x\rt y)\rt y + 2[(x\rt y)\rt x]\rt y + 2[(x\rt y)\rt y]\rt x + (y\rt x)\rt x\big]\,a^2b^2
\end{split}
\end{equation}

Assume to the contrary that $\mathbf{k}[X]$ is power associative. Then $(u\cdot u)\cdot (u\cdot u) = ((u\cdot u)\cdot u)\cdot u$. Since $char(\mathbf{k})\neq 2, 3$, it is direct to check (plugging in different values for $a$ and $b$) that the coefficients (which are the quandle ring elements) of $ab^3$, $a^3b$ and $a^2b^2$ must be pairwise equal on both sides of the equality $(u\cdot u)\cdot (u\cdot u) = ((u\cdot u)\cdot u)\cdot u$. In particular, by comparing the coefficients of $a^2b^2$ in \eqref{NK1} and \eqref{NK2}, we get $x\rt y =(x\rt y)\rt y$, which implies $x = x\rt y$. Since $x\rt y = y\rt x$, $x = x\rt y$ implies $x=y$ which contradicts our assumption that $x\neq y$. This completes the proof of part (a). 

Next we prove part (b). Choose $x, y\in X$ such that $x\neq y$ and $x\rt y \neq y\rt x$. Let $u=ax+by$, where $ab$ is nonzero. Then 
$$u\cdot u=a^2x+ab\,(x\rt y)+ ab (y\rt x)+b^2y.$$

Now we consider $(u\cdot u)\cdot u$ and $u\cdot (u\cdot u)$. We have
\begin{equation}\label{NK3}
\begin{split}
(u\cdot u)\cdot u &= x\,a^3+ y\,b^3 + \big[(x\rt y) + (x\rt y)\rt x + (y\rt x)\rt x \big]\,a^2b\cr
&+\big[(y\rt x) + (x\rt y)\rt y + (y\rt x)\rt y \big]\,ab^2
\end{split}
\end{equation}

and 

\begin{equation}\label{NK4}
\begin{split}
u\cdot (u\cdot u)&= x\,a^3+ y\,b^3 + \big[(y\rt x) + x\rt (x\rt y) + x\rt (y\rt x)\big]\,a^2b\cr
&+\big[(x\rt y) + y\rt (x\rt y) + y\rt (y\rt x) \big]\,ab^2.
\end{split}
\end{equation}

Assume that $\mathbf{k}[X]$ is power associative. Then we must have $(u\cdot u)\cdot u = u\cdot (u\cdot u)$. Since $char(\mathbf{k})\neq 2$, by setting \eqref{NK3}=\eqref{NK4} and letting $a=b=1$ and $a=-1,b=1$, we get 
$$x\rt y + (x\rt y)\rt x + (y\rt x)\rt x = y\rt x + x\rt (x\rt y) + x\rt (y\rt x).$$

Since $(x\rt y)\rt x=x\rt (y\rt x)$, the equation becomes 
$$x\rt y + (y\rt x)\rt x = y\rt x + x\rt (x\rt y).$$

Since $x\rt y \neq y\rt x$, we get $x\rt (x\rt y)= x\rt y$, which implies $x\rt y = x$. This completes the proof of part (b). 

\end{proof}		
\section{Various properties of the quandle ring $\mathbf{k}[X]$} \label{properties}
In this section, we investigate different properties of quandle rings.  
\subsection{Basic properties}

The following proposition shows that if the quandle $X$ is a union of a finite orbit $X_1$ with more than one element and any quandle $X_2$, then the quandle ring $\mathbf{k}[X]$ is not an integral domain.
	
\begin{pro}
Let $X=X_1\ \amalg X_2$ be a quandle with $1<|X_1|<\infty$. Then $\mathbf{k}[X]$ is not a domain.
\end{pro}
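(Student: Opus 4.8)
The plan is to exhibit an explicit pair of nonzero elements of $\mathbf{k}[X]$ whose product vanishes. The natural candidate is the \emph{orbit sum}
\[
e := \sum_{x \in X_1} x,
\]
which is a well-defined element of $\mathbf{k}[X]$ precisely because $X_1$ is finite, so that $e$ has finite support. Since $|X_1|>1$, there are at least two distinct basis elements in the support of $e$, and this is exactly the slack I will exploit.

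First I would record the key computational fact: for every $y \in X$ one has $e \cdot y = e$. Indeed,
\[
e \cdot y = \sum_{x \in X_1}(x \rt y) = \sum_{x \in X_1} R_y(x).
\]
Because $X_1$ is an orbit of $Inn(X)$, it is invariant under every inner automorphism, and $R_y \in Inn(X)$ is a bijection of $X$; hence $R_y$ restricts to a permutation of the finite set $X_1$. Reindexing the sum by this permutation gives $\sum_{x \in X_1} R_y(x) = \sum_{x \in X_1} x = e$, as claimed. Extending by $\mathbf{k}$-bilinearity, $e \cdot z = \big(\sum_{y} b_y\big)\, e$ for any $z = \sum_y b_y\, y$; that is, right multiplication by $z$ carries $e$ to its augmentation times $e$.

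Now I would conclude. Choose two distinct elements $x_1, x_2 \in X_1$, which exist since $|X_1|>1$, and set $w := x_1 - x_2$. Then $w \neq 0$, while $e \cdot w = e \cdot x_1 - e \cdot x_2 = e - e = 0$ by the previous step. Since $X_1 \neq \emptyset$ and distinct elements of $X$ form a free $\mathbf{k}$-basis of $\mathbf{k}[X]$, the element $e$ is nonzero. Thus $e$ and $w$ are nonzero elements of $\mathbf{k}[X]$ with $e \cdot w = 0$, so $\mathbf{k}[X]$ has zero divisors and is not a domain.

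The only point that genuinely requires the hypotheses, and hence the main thing to get right, is the identity $e \cdot y = e$: it uses both that $X_1$ is a single $Inn(X)$-orbit (so that $R_y$ cannot carry any summand out of $X_1$) and that $X_1$ is finite (so that $e$ exists and the reindexing is valid). The second component $X_2$ plays no active role beyond allowing $X$ to properly contain a single orbit; the argument works verbatim when $X_2 = \emptyset$, i.e. when $X = X_1$ is connected.
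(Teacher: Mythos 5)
Your proof is correct and is essentially the same as the paper's: the paper likewise takes the orbit sum $\sum_{z\in X_1} z$ and multiplies it by a difference $x-y$ of two distinct elements, the product vanishing because each $R_y$ permutes the finite orbit $X_1$. You simply spell out the reindexing argument that the paper leaves implicit in its appeal to axiom (II).
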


\begin{proof}
Indeed, it follows from property $(II)$ of Definition~\ref{quandledef} that  $$(\sum\limits_{z\in X_1} z)\cdot (x-y)=0,$$ where $x$ and $y$ are any two distinct elements of $X$.
\end{proof}

\begin{rmk}
If the ring $\mathbf{k}$ is a domain and $X=\{x\}$ is the one element quandle, then  $\mathbf{k}[X]$ is a domain as well.
\end{rmk}
\begin{qst}
Are there any other quandles  $X$, for which $\mathbf{k}[X]$ is a domain? 
\end{qst}

\begin{df}
Let $X=\{e_1, \cdots, e_n\}$ be a quandle of finite cardinality. We define the \textit{order} of an element $x\in \mathbf{k}[X]$ to be the largest index $i$ that occurs in the expression $x=\sum a_ie_i$.
\end{df}
We then have the following proposition that gives the conditions for the quandle ring $\mathbf{k}[X]$ to be both left and right Noetherian.
\begin{pro}
Let $X$ be a quandle of finite cardinality and $\mathbf{k}$ be a Noetherian ring. Then $\mathbf{k}[X]$ is both left and right Noetherian ring.
\end{pro}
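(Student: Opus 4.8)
The plan is to forget the (nonassociative) multiplication almost entirely and argue at the level of $\mathbf{k}$-modules. Writing $n=|X|$ and $X=\{e_1,\dots,e_n\}$, the quandle ring $\mathbf{k}[X]$ is by construction a free $\mathbf{k}$-module with basis $X$, hence finitely generated over $\mathbf{k}$. Since $\mathbf{k}$ is Noetherian, a finitely generated $\mathbf{k}$-module satisfies the ascending chain condition on $\mathbf{k}$-submodules; equivalently $\mathbf{k}[X]\cong\mathbf{k}^{\,n}$ is a Noetherian $\mathbf{k}$-module. The crux is therefore to observe that every right ideal and every left ideal of $\mathbf{k}[X]$ is in particular a $\mathbf{k}$-submodule, so that any ascending chain of one-sided ideals is an ascending chain of $\mathbf{k}$-submodules and must stabilize. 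Granting this, $\mathbf{k}[X]$ is simultaneously left and right Noetherian.

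To make the stabilization explicit, and to exploit the notion of order introduced above, I would run the standard leading-coefficient argument. Given an ascending chain $I_1\subseteq I_2\subseteq\cdots$ of right ideals, for each index $i\in\{1,\dots,n\}$ and each $m$ let $\mathfrak{a}_i(m)\subseteq\mathbf{k}$ be the set consisting of $0$ together with all coefficients $a_i$ that occur as the top (order $i$) coefficient of some element $x=\sum_{j\le i}a_je_j\in I_m$ of order exactly $i$. Because $I_m$ is closed under addition and under multiplication by $\mathbf{k}$, each $\mathfrak{a}_i(m)$ is an ideal of $\mathbf{k}$, and for fixed $i$ these form an ascending chain in $m$. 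As $\mathbf{k}$ is Noetherian, each of the finitely many chains $\bigl(\mathfrak{a}_i(m)\bigr)_m$ stabilizes, so there is an $N$ beyond which all $n$ of them are constant. A downward induction on order then shows $I_m=I_N$ for every $m\ge N$: any $x\in I_m$ of order $i$ with top coefficient $a\in\mathfrak{a}_i(m)=\mathfrak{a}_i(N)$ can be reduced, upon subtracting an element of $I_N$ of order $i$ with the same top coefficient, to an element of $I_m$ of strictly smaller order, and one iterates down to $0$. The identical argument with left in place of right handles left ideals.

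The one genuinely quandle-theoretic point, and the step I expect to require the most care, is the claim that one-sided ideals are $\mathbf{k}$-submodules. Here $\mathbf{k}[X]$ is regarded as a $\mathbf{k}$-algebra and its ideals are understood to be $\mathbf{k}$-submodules closed under the relevant one-sided multiplication, so scalar-closure holds by definition and the reduction above applies verbatim on both sides. It is worth recording that for right ideals scalar-closure is in fact automatic from the ring structure: by axioms (II) and (III) of Definition~\ref{quandledef} each right multiplication $R_e$ with $e\in X$ is a bijective automorphism, so the operators $\{R_e\}_{e\in X}$ generate inside $\End_{\mathbf{k}}(\mathbf{k}[X])$ the permutation representation of the finite group $Inn(X)$, an associative algebra that contains $\Id$ and hence every scalar operator $c\cdot\Id$; thus a right ideal, being stable under all $R_a$, is forced to be a $\mathbf{k}$-submodule. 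The left multiplications $L_e$ need not be bijective, so this automatic scalar-closure can fail for left ideals, which is exactly why the $\mathbf{k}$-submodule convention is the natural framework; once it is in force, the Noetherianity of the free module $\mathbf{k}^{\,n}$ delivers left- and right-Noetherianity of $\mathbf{k}[X]$ on the same footing.
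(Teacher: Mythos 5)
Your proof is correct, and its computational core --- the leading-coefficient reduction by downward induction on the order, using Noetherianity of $\mathbf{k}$ to stabilize the chains of leading-coefficient ideals --- is exactly the argument the paper gives. What you add is the cleaner packaging: once one knows that every one-sided ideal is a $\mathbf{k}$-submodule, the whole statement is just the standard fact that the finitely generated (indeed free of rank $n$) module $\mathbf{k}[X]\cong\mathbf{k}^{\,n}$ over a Noetherian ring is a Noetherian module, so the explicit reduction is not even needed. More substantively, you make explicit a point the paper uses silently: its proof multiplies elements of an ideal by scalars $\beta_i\in\mathbf{k}$ and speaks of ``the ideal of leading coefficients,'' both of which presuppose that ideals are closed under the $\mathbf{k}$-action. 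Your observation that this is automatic for right ideals (each $R_e$ is a permutation of the finite set $X$, hence of finite order, so $c\,\Id=(cR_e)\circ R_e^{k-1}$ is realized by iterated right multiplications) while for left ideals it must be taken as part of the definition, since the $L_e$ need not be bijective, is a genuine and worthwhile clarification that the paper's proof omits.
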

\begin{proof}
 Let $I\subset \mathbf{k}[X]$ be an ideal, $I_m$  the subset  of elements of order $m$ in $I$ and $\tilde{I}_m\subset \mathbf{k}$  the ideal of leading coefficients of elements in $I_m$. Clearly, $I=\underset{m}{\cup} I_m$, moreover, each $\tilde{I}_m$ is finitely generated, since  the ring $\mathbf{k}$ is Noetherian. Now it is sufficient to verify that $I_m$ is generated by any $(f_{m_1},\hdots,f_{m_k})\subset \overset{m}{\underset{s=1}{\cup}} I_s$, whose leading coefficients $(a_{m_1},\hdots,a_{m_k})$ generate $\overset{m}{\underset{s=1}{\cup}}\tilde{I}_s$. This is checked via induction on $m$. Indeed, let $g=\alpha_me_m+\sum\limits_{i\leq m-1} a_ie_i\in I_m$ be an element of order $m$. Then $\alpha_m=\sum\limits_{i=1}^{k}\beta_ia_{m_i}$ and the element  $g-\sum\limits_{i=1}^{k}\beta_if_{m_i}$ has order strictly less than $m$.

\end{proof}
\par
The next example shows that $\mathbf{k}[X]$ is neither necessarily a right nor a left Noetherian ring, if the quandle $X$ is not of finite cardinality.
\begin{ex}
Consider the trivial quandle $X= X_0\amalg X_1\amalg X_2\amalg X_3\amalg \hdots $ with each $X_i=\{e_i\}$ consisting of a single element and $\mathbf{k}$ a field. 

	Take the chain $I_1\subset I_2 \subset I_3 \subset \hdots$ with $I_j:=\mathbf{k}\langle e_1-e_0, e_2-e_0, \hdots, e_j-e_0\rangle$. As for any $\gamma \in \mathbf{k}[X]$ and any $a \in I$, where $I \subset \mathbf{k}[X]$ is the augmentation ideal, we have $\gamma\rt a=0$ and $a\rt\gamma=ca$ for some $c\in\mathbf{k}$, each $I_j$ is indeed an ideal and the chain $I_1\subset I_2 \subset I_3 \subset \hdots$  (considered as left or right or two-sided ideals) does not stabilize.

\end{ex}

\par
A similar example shows that the ring $\mathbf{k}[X]$ is not necessarily principal, even if the quandle $X$ is of finite cardinality.	\begin{ex}
Consider the trivial quandle $X=X_1\amalg X_2\amalg\hdots\amalg X_k$ with each $X_i=\{e_i\}$ consisting of a single element and let $I \subset \mathbf{k}[X]$ be the augmentation ideal. For the same reasons as in the example above, the ideal $I$ can not be generated by a single element if $n\geq 3$.

\end{ex}

\subsection{Study of ideals in $\mathbf{k}[X]$ via groups and semigroups}
Let $\mathcal{T}_X$ stand for the semigroup of all maps from a finite set $X$ to itself (the full transformation semigroup of $X$). Representations of $\mathcal{T}_X$ were extensively studied by A. H. Clifford (see for example \cite{HZ}) and are intrinsically connected to the study of left ideals in $\mathbf{k}[X]$ as shown below.
\par
Let $X=X_1\amalg X_2\amalg\hdots\amalg X_k$ be the decomposition of a quandle $X$ into orbits. Recall that any element $x \in X$ gives rise to two functions from $X$ to itself given by $L_x: X\rightarrow X$ and $R_x: X\rightarrow X$ (the latter function is a bijection). The left multiplication produces a map  $\psi: X\rightarrow\mathcal{T}_X $. The function $R_x: X\rightarrow X$ restricts to functions $X_i\rightarrow X_i$ for any orbit $X_i$ by definition. This gives rise to the map $\varphi: X\rightarrow S_{n}$, which restricts to the maps $\varphi_i: X\rightarrow S_{n_i}$, where $n_i=|X_i|$ and $n=\sum\limits_{i=1}^{k} n_i$. The composition of functions  
$L_y \circ L_x$ and  
$R_y \circ R_x: X\rightarrow X$ correspond to the products of the (semi)group elements in the image. We denote the semigroup generated by the image of $\psi  (X)$ by $H_X\subset S_X$, the semigroup generated by the image of  $\varphi  (X)$ by $Inn(X)\subset S_X$ (this group is  known as the group of inner automorphisms of $X$ (see \cite{EMR})) and the groups generated by the images of  $\varphi_i  (X)\subset S_{n_i}$'s by $G_{X_i}\subset S_{n_i}$.  However, $H_X$ is not necessarily a group. For example, if the quandle $X$ contains a single element orbit  $\{x\}$ then $H_X$ contains the map sending all elements of $X$ to $ x$, which is not invertible.
\par
We denote the underlying $\mathbf{k}$ - vector space of $\mathbf{k}[X]$ by $V$. By remarks of the preceding paragraph to study the right ideals in 
$\mathbf{k}[X]$ is equivalent to viewing $V$ as a representation of the group 
$Inn(X)$. Clearly, $V=\underset{i=1}{\overset{k}{\oplus}}V^i$, where $V^i= \mathbf{k}[X_i]$, and therefore, one needs to understand the decompositions of the  $V^i$'s into irreducible representations of corresponding $G_{X_i}$'s. First, notice that each $V^i$ contains a one-dimensional subspace $V_{triv}:=\mathbf{k}v_{triv}$ invariant under the action of $G_{X_i}$, where $v_{triv}=\sum\limits_{x\in X_i} x$.

The following notion of \textit{$2$-transitivity} (and also higher $k$-transitivity) of quandles was introduced in \cite{McCarron2, McCarron}.  Two-transitive quandles are called \textit{two-point homogeneous} quandles in \cite{Tamaru}.
\begin{df}
The action of a group (semigroup) $G$ on a set $X$ is called \textit{$2$-transitive} if for any two pairs $(x_1,y_1)\in X\times X$ and $(x_2,y_2)\in X\times X$, there exists an element $g\in G$, such that $g\cdot x_1 =x_2$ and $g\cdot y_1 =y_2$.
\end{df}
\begin{df}
Let $X=X_1\amalg X_2\amalg\hdots\amalg X_k$ be a finite quandle. $X$ is said to be \textit{left (right) $2$-transitive} if the semigroup  $H_X$ (the group $Inn(X)$) acts $2$-transitively on $X$. $X$ is said to be \textit{left (right) orbit $2$-transitive} if the semigroup  $H_X$ (each of the groups $G_{X_i}$) acts $2$-transitively on $X$ (orbit $X_i$).
\end{df}
\par
Let $V_{st}\subset V$ be the subspace orthogonal to the vector $v_{triv}=\sum\limits_{x\in X_i} x$. 

The following theorem gives the list of subgroups $G\leqslant  S_n$ for which the representation $V_{st}$ is irreducible. The first assertion can be found as Theorem $1 (b)$ in \cite{Saxl} and second as $(ii)$ in the Main Theorem of \cite{BrKl}, and the proof and description of groups $(i)-(v)$ can be found in \cite{Mortimer}. The groups $(i)-(v)$ are respectively the affine and projective general semilinear groups, projective semilinear unitary groups, Suzuki and Ree groups. We refer the reader to \cite[Chapter 7]{DixonMortiner} for definitions of these groups. 
\par
\begin{thm} 
\label{TransClassif}
\begin{enumerate}
\item[(a)]

 If $char(\mathbf{k})=0$  then $V_{st}$ is an irreducible representation of the subgroup $G < S_n$ if and only if  $G$ is $2$-transitive and $A_n\not\leq G$. 
 \item[(b)] 
 In case $char(\mathbf{k})=p>3$  then $V_{st}$ is an irreducible representation of the subgroup $G < S_n$ if and only if  $G$ is $2$-transitive and $A_n\not\leq G$ except 
 \begin{enumerate}
 \item[(i)] $G \leqslant  A\Gamma L(m, q)$, and $p$ divides $q$;
 \item[(ii)] $G \leqslant  P\Gamma L(m, q)$, $m\geq 3$ and $p$ divides $q$;
 \item[(iii)] $G \leqslant  P\Gamma U(3, q)$, and $p$ divides $q+1$;
 \item[(iv)] $G \leqslant  Sz(q)$, and $p$ divides $q+1+m$, where $m^2=2q$;
 \item[(v)] $G \leqslant  Re(q)$, and $p$ divides $(q+1)(q+1+m)$, where $m^2=3q$.
\end{enumerate} 
\end{enumerate}
\end{thm}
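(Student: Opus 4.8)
The plan is to treat the two characteristics by entirely different mechanisms: part $(a)$ admits a short self-contained character-theoretic argument, whereas part $(b)$ must be reduced to the classification of the finite $2$-transitive groups together with the known modular structure of their permutation modules.

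For part $(a)$ I would work with the permutation module $V=\mathbf{k}[X_i]$ on $n=|X_i|$ points, where $G$ acts transitively. Over a field of characteristic zero the permutation character $\pi(g)=|\mathrm{Fix}(g)|$ satisfies $\langle\pi,\mathbbm{1}\rangle=1$ by transitivity and $\langle\pi,\pi\rangle=r$, the rank of $G$ (the number of $G$-orbits on $X_i\times X_i$). Writing $\pi=\mathbbm{1}+\chi_{st}$, where $\chi_{st}$ is the character of $V_{st}$, and noting that the trivial constituent occurs with multiplicity one in $V$ (so $\langle\mathbbm{1},\chi_{st}\rangle=0$), I obtain $r=1+\langle\chi_{st},\chi_{st}\rangle$. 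Hence $V_{st}$ is irreducible exactly when $r=2$, i.e. exactly when $G$ is $2$-transitive. The clause $A_n\not\leq G$ here merely delimits the class of groups under consideration, excluding the two large groups $S_n$ and $A_n$ (which are handled directly); within this class the equivalence is precisely $2$-transitivity. This is the content of Saxl's Theorem $1(b)$, which I would cite in \cite{Saxl} for the sharp statement rather than reproduce the underlying rank computation in detail.

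For part $(b)$ the character count is no longer available, since over a field of characteristic $p$ the module $V$ need not be semisimple and $V_{st}$ (the orthogonal complement of $v_{triv}$) can have several composition factors even when $G$ is $2$-transitive. My plan is therefore to invoke the classification of the finite $2$-transitive groups, a consequence of the classification of finite simple groups: each such $G$ is either of affine type, with socle an elementary abelian group, or almost simple, with socle of projective, unitary, Suzuki or Ree type. For each of these families the Loewy and composition structure of the reduction mod $p$ of the permutation module was determined by Mortimer \cite{Mortimer}, building on the general irreducibility criterion appearing as $(ii)$ in the Main Theorem of \cite{BrKl}. I would then read off from that case analysis that $V_{st}$ remains irreducible precisely when $G$ is $2$-transitive and $A_n\not\leq G$, the sole failures being the defining-characteristic situations listed as $(i)$--$(v)$: when $p$ divides the parameter $q$ of the affine or projective semilinear group, or the corresponding divisibilities $p\mid q+1$, $p\mid q+1+m$ with $m^2=2q$, and $p\mid(q+1)(q+1+m)$ with $m^2=3q$ for the unitary, Suzuki and Ree cases.

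The main obstacle lies entirely in part $(b)$: there is no uniform representation-theoretic reason for $V_{st}$ to stay irreducible in positive characteristic, so the statement is genuinely a theorem about each $2$-transitive family separately. The hardest cases are the defining-characteristic ones ($p\mid q$), where the natural module of the acting group is defined over $\mathbb{F}_q$ and the permutation module on the points of the affine or projective space acquires extra composition factors of geometric origin (flag-type or exterior-power constituents); pinning down exactly the divisibility conditions under which the heart of the permutation module splits is precisely what the case-by-case work of \cite{BrKl} and \cite{Mortimer} accomplishes, and I would rely on those results rather than attempt to reprove them here.
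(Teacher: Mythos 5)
Your proposal matches the paper's treatment: the paper offers no proof of its own, merely attributing part (a) to Theorem 1(b) of \cite{Saxl} and part (b) to the Main Theorem (ii) of \cite{BrKl} together with Mortimer's case-by-case description of the exceptional families (i)--(v), which is exactly the reduction you carry out. Your added rank computation $\langle\chi_{st},\chi_{st}\rangle=r-1$ for part (a), and your observation that the hypothesis $A_n\not\leq G$ only delimits the class of subgroups considered, supply correct detail that the paper leaves implicit.
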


\begin{df}
Let $S$ be a finite semigroup. If $e$ is
an idempotent, then $eSe$ is a monoid with identity $e$; its group of units $G_e$ is called
the \textit{maximal subgroup} of $S$ at $e$.
\end{df}
\begin{cor}\label{RightModDecomp}
\begin{enumerate}
		\item 
Let $X=X_1\amalg X_2\amalg\hdots\amalg X_k$ be a finite quandle with a $2$-transitive action of each $G_{X_i}$  on the corresponding orbit $X_i$ and $char(\mathbf{k})=0$. Then $\mathbf{k}[X]\simeq \underset{i=1}{\overset{k}{\bigoplus}}(V^i_{st}\oplus V^i_{triv})$ where the r.h.s. consists of simple right ideals.
\item 
Let $X=X_1\amalg X_2\amalg\hdots\amalg X_k$ be a finite quandle with a $2$-transitive action of each $G_{X_i}$  on the corresponding orbit $X_i$  and $G_{X_i}$'s not among the groups from $(i)-(v)$ in Theorem~\ref{TransClassif},  $char(\mathbf{k})=p>3$. Then $\mathbf{k}[X]\simeq \underset{i=1}{\overset{k}{\bigoplus}}(V^i_{st}\oplus V^i_{triv})$ where the r.h.s. consists of simple right ideals.
\par
\item
Let $X$ be a finite quandle such that  $H_X\subset \mathcal{T}_X$ contains a maximal subgroup with a $2$-transitive action on $X$ and $char(\mathbf{k})=0$. Then $\mathbf{k}[X]\simeq V_{st}\oplus V_{triv}$ where the r.h.s. consists of simple left ideals.
\par
\item
Let $X$ be a finite quandle such that  $H_X\subset \mathcal{T}_X$ contains a maximal subgroup with a $2$-transitive action on $X$ not among the groups from $(i)-(v)$ in Theorem~\ref{TransClassif} and $char(\mathbf{k})=p>3$. Then $\mathbf{k}[X]\simeq V_{st}\oplus V_{triv}$ where the r.h.s. consists of simple left ideals.\end{enumerate}
\end{cor}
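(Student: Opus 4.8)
The plan is to translate the two kinds of one-sided ideals into module-theoretic language and then feed in the decompositions of permutation modules supplied by Theorem~\ref{TransClassif}. The starting observation is that for $w=\sum_y a_y y$ and a basis element $x$ one has $w\cdot x=\sum_y a_y(y\rt x)=R_x(w)$ and $x\cdot w=\sum_y a_y(x\rt y)=L_x(w)$. Extending linearly, a subspace $W\subseteq V$ is a right ideal if and only if $R_x(W)\subseteq W$ for every $x\in X$, and a left ideal if and only if $L_x(W)\subseteq W$ for every $x$. Since each $R_x$ is a bijection, the right ideals are exactly the $Inn(X)$-invariant subspaces of $V$; since the $L_x$ need not be invertible, the left ideals are exactly the $H_X$-invariant subspaces, where $H_X$ is only a semigroup. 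Thus the whole statement reduces to decomposing $V$ as a module over $Inn(X)$ (parts (1)--(2)) and over $H_X$ (parts (3)--(4)), with "simple ideal" meaning "irreducible submodule".

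For parts (1) and (2) I would argue as follows. Each $R_x$ preserves every orbit $X_i$, so $V=\bigoplus_{i=1}^k V^i$ with $V^i=\mathbf{k}[X_i]$ a subrepresentation on which the $Inn(X)$-action factors through $G_{X_i}$; hence an $Inn(X)$-submodule of $V^i$ is the same as a $G_{X_i}$-submodule. Inside $V^i$ sit the line $V^i_{triv}=\mathbf{k}\sum_{x\in X_i}x$ and the augmentation hyperplane $V^i_{st}=\{\sum_{x\in X_i}a_x x:\sum_x a_x=0\}$, both $G_{X_i}$-stable. In characteristic $0$ (resp. characteristic $p>3$ with $G_{X_i}$ outside the exceptional list) Theorem~\ref{TransClassif} makes $V^i_{st}$ irreducible; this is compatible with $V^i_{triv}\subseteq V^i_{st}$ only in degenerate cases, so in fact $|X_i|$ is invertible in $\mathbf{k}$ and $V^i=V^i_{triv}\oplus V^i_{st}$ is a genuine direct sum. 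As the one-dimensional $V^i_{triv}$ is automatically irreducible, $V=\bigoplus_i(V^i_{st}\oplus V^i_{triv})$ is a decomposition into irreducible $Inn(X)$-submodules, i.e. into simple right ideals, which is exactly the asserted isomorphism.

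For parts (3) and (4) the same skeleton applies over the semigroup $H_X$. First I would pin down the idempotent $e$: since elements of the maximal subgroup $G_e\subseteq eH_Xe$ have image contained in $e(X)$, a transitive (a fortiori $2$-transitive) action of $G_e$ on all of $X$ forces $e(X)=X$, so $e=\mathrm{id}$ and $G_e$ is the group of units of $H_X$, acting on $V$ by honest permutations. The augmentation subspace $V_{st}=\{\sum_{x\in X}a_x x:\sum_x a_x=0\}$ is preserved by every $L_x$, because left multiplication preserves the sum of coefficients, so $V_{st}$ is automatically a left ideal; being irreducible already over the subgroup $G_e$ (Theorem~\ref{TransClassif} applied to the $2$-transitive $G_e$), it is a fortiori an irreducible $H_X$-submodule, hence a simple left ideal. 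It then remains to produce $V_{triv}=\mathbf{k}\sum_{x\in X}x$ as a complementary simple left ideal and to check $V=V_{st}\oplus V_{triv}$, the latter holding as soon as $|X|$ is invertible in $\mathbf{k}$.

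The main obstacle is precisely this last point in the left-handed case. Unlike $V_{st}$, the line $V_{triv}$ is an $H_X$-submodule only when each $L_x$ sends $\sum_y y$ to a scalar multiple of itself, i.e. only when every $L_x$ is a bijection and $X$ is Latin. So the heart of parts (3)--(4) is to show that the hypothesis --- that $H_X$ possesses a maximal subgroup acting $2$-transitively on $X$ --- forces $H_X$ to consist entirely of bijections (equivalently, that $H_X=G_e$ is a group and $X$ is Latin), after which the left-ideal case becomes formally identical to the right-ideal case with $Inn(X)$ replaced by $H_X$. I expect this reduction to lean on the Clifford--Munn--Ponizovskii description of irreducible representations of finite semigroups through their maximal subgroups, alluded to in the discussion of $\mathcal{T}_X$ preceding the corollary; establishing it, rather than the representation-theoretic input from Theorem~\ref{TransClassif}, is where the real work lies.
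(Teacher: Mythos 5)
Your reduction of right ideals to $Inn(X)$-submodules and of left ideals to $H_X$-submodules is exactly the paper's setup, and your argument for parts (1) and (2) is the intended one: the orbit decomposition $V=\bigoplus_i V^i$, the splitting $V^i=V^i_{triv}\oplus V^i_{st}$ (legitimate because irreducibility of $V^i_{st}$ under the stated hypotheses rules out $p\mid |X_i|$), and Theorem~\ref{TransClassif} for the simplicity of $V^i_{st}$. That part is complete.

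For parts (3) and (4), however, you have correctly isolated the missing step --- that $V_{triv}=\mathbf{k}\sum_{y}y$ is in fact a left ideal, equivalently that every $L_x$ is a bijection --- but you leave it unproved, and the direction you point in (Clifford--Munn--Ponizovskii theory) is not where the argument lives. The gap closes with two elementary quandle facts. First, $L_x(z)=x\rt z=R_z(x)$, so the image of $L_x$ is contained in the $Inn(X)$-orbit of $x$; hence if some $L_x$ is surjective, $X$ is connected. Your own observation that a $2$-transitive maximal subgroup forces $e=\mathrm{id}$ shows that $H_X$ contains a bijection, and since a composite of self-maps of a finite set is bijective only if every factor is, at least one generator $L_x$ is bijective; therefore $X$ is connected. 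Second, axiom (III) gives $R_y\circ L_x=L_{x\rt y}\circ R_y$, i.e. $L_{g(x)}=gL_xg^{-1}$ for $g\in Inn(X)$, so on a connected quandle all the $L_x$ are conjugate by permutations and hence all bijective. Then $X$ is Latin, $H_X$ is a permutation group, $V_{triv}$ is $H_X$-stable, and parts (3)--(4) become formally identical to (1)--(2) with $Inn(X)$ replaced by $H_X$. Without this argument (or some substitute) your proposal does not establish that the line $V_{triv}$ is a left ideal, so the claimed decomposition into simple left ideals is not proved.
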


\begin{df}
A quandle $X$ is of \textit{right (left) cyclic type (or cyclic)} if
for each $x \in X$ the permutation $\varphi (x) (\mbox{or } \mbox{ the semigroup element } \psi(x))$ acts on $X \setminus \{x\}$ as a cycle of length $|X| - 1$, where
$|X|$ denotes the cardinality of X. 
\end{df}
\par
The above discussion motivates to find the conditions on $X$ to be $2$-transitive and orbit $2$-transitive. The following theorem was obtained in \cite{Vend} (see Corollary $4$ and references therein). 
\begin{thm}
Every finite right  $2$-transitive quandle is of right cyclic type.
\end{thm}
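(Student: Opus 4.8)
The plan is to prove that for each $x\in X$ the right multiplication $R_x=\varphi(x)$ restricts to a single $(|X|-1)$-cycle on $\Omega_x:=X\setminus\{x\}$. First I would record the conjugation identity $g R_z g^{-1}=R_{g(z)}$, valid for every automorphism $g$ of $X$ and every $z\in X$; it is immediate from the computation $g R_z g^{-1}(a)=g(g^{-1}(a)\rt z)=a\rt g(z)$, using that $g$ is a quandle homomorphism. Two consequences matter. Taking $g\in Inn(X)$ with $g(x)=y$ (possible since $2$-transitivity forces transitivity, so $X$ is connected) shows all the $R_z$ are conjugate in $Inn(X)$, hence have the same cycle type; so it suffices to establish the single-cycle claim for one fixed $x$, and conjugation then propagates it. Taking instead $g$ in the point stabilizer $G_x:=Inn(X)_x$ gives $gR_xg^{-1}=R_{g(x)}=R_x$, that is, $R_x\in Z(G_x)$.

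Next I would feed in $2$-transitivity through the stabilizer: by the standard criterion, $Inn(X)$ acts $2$-transitively on $X$ exactly when $G_x$ acts transitively on $\Omega_x$. I then use the elementary fact that a nontrivial central element of a transitive permutation group is fixed-point-free: if $\sigma\in Z(G_x)$ fixes some $y\in\Omega_x$ and $h\in G_x$ satisfies $h(y)=y'$, then $\sigma y'=\sigma h y=h\sigma y=hy=y'$, so $\sigma$ fixes all of $\Omega_x$. Applying this to every nontrivial power of $R_x$ (each again central) shows that all cycles of $R_x$ on $\Omega_x$ have the common length $\ell=\operatorname{ord}(R_x)$; equivalently, the cyclic group $\langle R_x\rangle$ acts freely on $\Omega_x$, with $m=(|X|-1)/\ell$ orbits. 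Since $X$ is nontrivial and connected we have $R_x\neq \Id$, so $\ell\geq 2$ and each nontrivial power of $R_x$ fixes precisely the one point $x$.

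It then remains to prove the single assertion $\ell=|X|-1$, i.e. that the free central cyclic group $\langle R_x\rangle$ is already transitive on $\Omega_x$, and I expect this to be the main obstacle. Transitivity of $G_x$ does not by itself force transitivity of its central subgroup $\langle R_x\rangle$: a general transitive group may have an intransitive centre, and the orbits of $\langle R_x\rangle$ merely form a $G_x$-invariant block system $\mathcal B$ on $\Omega_x$. One might hope to kill $\mathcal B$ by primitivity, but $2$-transitivity of $Inn(X)$ on $X$ does not make the induced $G_x$-action on $\Omega_x$ primitive; for the affine quandle on $\mathbb F_5$ the stabilizer is an imprimitive cyclic group of order $4$, yet $R_x$ is nonetheless a full $4$-cycle. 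So neither pure permutation-group generalities nor primitivity suffices, and one genuinely needs extra leverage.

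To supply that leverage I would use the strong restriction just obtained, namely that the central element $R_x$ is nontrivial, so $Z(G_x)\neq 1$, together with the classification of finite $2$-transitive permutation groups (the same body of results underlying Theorem~\ref{TransClassif}). Among $2$-transitive groups the point stabilizer has nontrivial centre essentially only in the affine cases $G\leq A\Gamma L(d,p)$, together with a short further list (sharply $2$-transitive groups arising from near-fields and a few small almost simple groups); in each surviving case one identifies the distinguished central element $R_x$ and checks directly that it is an $(|X|-1)$-cycle. Concretely, in the affine case $X$ is an Alexander quandle on $\mathbb F_q$, the map $R_x$ is multiplication by the quandle multiplier recentred at $x$, and $\Omega_x\cong\mathbb F_q^{\times}$; here $2$-transitivity forces the multiplier to generate $\mathbb F_q^{\times}$ (otherwise its powers act intransitively on $\Omega_x$), so $R_x$ is a single $(q-1)$-cycle. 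This gives $m=1$ and $\ell=|X|-1$ in every case, and by the conjugacy remark of the first paragraph the same holds for every $R_z$, so $X$ is of right cyclic type.
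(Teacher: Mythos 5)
The paper does not actually prove this theorem: it imports it wholesale from \cite{Vend} (``see Corollary $4$ and references therein''), so there is no in-paper argument to compare against. Your reductions in the first two paragraphs are correct and are precisely the ones underlying the cited proof: the identity $gR_zg^{-1}=R_{g(z)}$ shows all $R_z$ share a cycle type and that $R_x$ is a nontrivial central element of the stabilizer $G_x=Inn(X)_x$; the standard equivalence of $2$-transitivity with transitivity of $G_x$ on $X\setminus\{x\}$, combined with the fact that nontrivial central elements of transitive groups are semiregular, forces all cycles of $R_x$ on $X\setminus\{x\}$ to have a common length $\ell$. You also correctly diagnose, with an honest counterexample-style discussion, that no elementary permutation-group argument upgrades ``$\langle R_x\rangle$ is semiregular'' to ``$\langle R_x\rangle$ is transitive'' on $X\setminus\{x\}$; this is genuinely where the theorem lives.

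The gap is that the final step is only a sketch. Invoking the classification of finite $2$-transitive groups whose point stabilizer has nontrivial centre is the right move, but you neither enumerate the ``short further list'' (sharply $2$-transitive groups over near-fields, plus the exceptional almost simple cases) nor verify in those cases that the distinguished central element realized by $R_x$ is a full $(|X|-1)$-cycle. Even in the affine case the assertion that $R_x$ is ``multiplication by the quandle multiplier'' needs proof: a priori $R_x$ is just some nontrivial element of $Z(G_x)$ with $G_x$ a transitive subgroup of $\Gamma L(d,p)$, and one must rule out non-scalar central elements and subgroups whose centre generates an intransitive cyclic group (e.g.\ $-I$ in $SL(2,3)$ acting on $\mathbb{F}_3^2\setminus\{0\}$ is central, semiregular, and not a long cycle; one has to show such configurations cannot arise from a $2$-transitive $Inn(X)$). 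Carrying out this case analysis is exactly the content of \cite{Vend}, so your proposal should be read as an accurate roadmap of the known proof rather than a self-contained argument; it is correct modulo that classification-based verification, which is also the step the paper itself outsources.
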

\begin{rmk}
It is also easy to see that if $X$ is of left cyclic type then it is a left $2$-transitive quandle.
\end{rmk}
\par
The above observations allow to strengthen Corollary~\ref{RightModDecomp}.
\begin{cor}
\begin{enumerate}
\item 
Let $X=X_1\amalg X_2\amalg\hdots\amalg X_k$ be a finite quandle with each subquandle $X_i$ of right cyclic type. Then $\mathbf{k}[X]\simeq \underset{i=1}{\overset{k}{\bigoplus}}(V^i_{st}\oplus V^i_{triv})$,  where the r.h.s. consists of simple right ideals.
\par
\item
Let $X$ be a finite quandle of left cyclic type. Then $\mathbf{k}[X]\simeq V_{st}\oplus V_{triv}$ (an equality of left ideals with the r.h.s. consisting of simple left ideals).
\end{enumerate}

\end{cor}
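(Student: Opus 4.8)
The plan is to obtain both assertions from Corollary~\ref{RightModDecomp} by showing that the cyclic-type hypotheses produce precisely the $2$-transitive actions required there; once this is done, the decompositions into simple ideals and the identification of the summands carry over verbatim. Together with the Theorem that every finite right $2$-transitive quandle is of right cyclic type (and the Remark that every left cyclic quandle is left $2$-transitive), this also shows the cyclic-type and $2$-transitivity conditions to be equivalent, so the corollary is really a reformulation of Corollary~\ref{RightModDecomp} in more directly verifiable terms.

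For part $(1)$, recall that every $R_x$ restricts to a permutation of each orbit $X_i$, whence $G_{X_i}=\langle R_x|_{X_i}: x\in X\rangle$ acts transitively on $X_i$, the latter being an $Inn(X)$-orbit. The key point is to upgrade transitivity to $2$-transitivity. Fix $a\in X_i$. Since $a\rt a=a$, the generator $R_a|_{X_i}$ fixes $a$, and because $X_i$ is of right cyclic type it acts on $X_i\setminus\{a\}$ as a single cycle of length $|X_i|-1$; hence $\langle R_a|_{X_i}\rangle\subset\mathrm{Stab}_{G_{X_i}}(a)$ is already transitive on $X_i\setminus\{a\}$. By the standard criterion that a transitive action is $2$-transitive exactly when some point stabilizer acts transitively on the complement of that point, each $G_{X_i}$ is $2$-transitive on $X_i$, and Corollary~\ref{RightModDecomp}$(1)$ yields $\mathbf{k}[X]\simeq\bigoplus_{i=1}^{k}(V^i_{st}\oplus V^i_{triv})$ with the summands simple right ideals.

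For part $(2)$, I would first note that the left cyclic hypothesis forces the transformation semigroup $H_X$ to be a group. For each $x$ the map $L_x=\psi(x)$ is a cycle of length $|X|-1$ on $X\setminus\{x\}$ together with the fixed point $x$, hence a bijection of the finite set $X$; as each generator $L_x$ has finite order, its inverse is a positive power of $L_x$ and thus lies in $H_X$, so the finitely generated semigroup $H_X\subset\mathcal{T}_X$ is in fact a group of permutations of $X$. Consequently $H_X$ is its own group of units, that is, the maximal subgroup of $H_X$ at the identity idempotent, and by the preceding Remark it acts $2$-transitively on $X$. Corollary~\ref{RightModDecomp}$(3)$ then gives $\mathbf{k}[X]\simeq V_{st}\oplus V_{triv}$, an equality of left ideals whose summands are simple.

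The only genuinely delicate step is verifying, in part $(2)$, that the $2$-transitive action supplied by the Remark is carried by an honest maximal subgroup of $H_X$ and not merely by the ambient semigroup; establishing that $H_X$ is already a group---via finiteness and the bijectivity of each $L_x$---is exactly what makes Corollary~\ref{RightModDecomp}$(3)$ applicable. Everything else, namely transitivity on an orbit, the stabilizer criterion for $2$-transitivity, and the simplicity of the standard summand, is either elementary or already packaged in Corollary~\ref{RightModDecomp}. Finally, just as there, the conclusions hold verbatim in characteristic $0$ and persist in characteristic $p>3$ provided the relevant groups avoid the exceptional families $(i)$--$(v)$ of Theorem~\ref{TransClassif}.
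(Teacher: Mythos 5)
Your proof is correct and follows the route the paper intends: the paper states this corollary without proof, deducing it from Corollary~\ref{RightModDecomp} together with the immediately preceding theorem and remark relating cyclic type to $2$-transitivity. You usefully supply the two details left implicit there — the point-stabilizer criterion showing that right cyclic type of each $X_i$ makes $G_{X_i}$ act $2$-transitively on $X_i$ (the quoted theorem literally states only the converse implication), and the verification that left cyclic type forces $H_X$ to be a group of permutations, hence itself the maximal subgroup needed to invoke part (3) of Corollary~\ref{RightModDecomp}.
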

\par

For all quandles with  order  up to eight, the following chart gives the number of quandles (up to isomorphism) and their corresponding number of right $2$-transitive quandles.

\begin{table}[ht]
\begin{center}
\begin{tabular}{ |c|c|c|c| } 
 \hline
Order & \# of quandles & \# of right $2$-transitive quandles &  \# of left $2$-transitive quandles  \\ 
  \hline
 $3$ & $3$ & $3$ & $2$ \\ 
  \hline
 $4$ & $7$ & $6$  & $3$  \\ 
 \hline
$5$ &  $22$ & $16$  & $7$ \\ 
 \hline
$6$ &  $73$ & $42$  & $14$ \\ 
 \hline
$7$ &  $298$ & $151$   & $39$\\ 
 \hline
$8$ &  $1581$ & $656$  & $105$ \\ 
 \hline
\end{tabular}
\end{center}
\caption{Right $2$-transitive quandles}
\end{table}

\par
\begin{ex}
Consider the dihedral quandle $R_n$ with odd $n>3$. This quandle is connected and the action of $Inn(R_n)$ is not $2$-transitive.
\end{ex}
\par 
The next proposition provides the decomposition of $\mathbf{k}[R_n]$ into the sum of simple right ideals. In the following proposition, $D_n$ denotes the group of symmetries of a regular $n$-gon. 

\begin{pro}
Fix the ground field  $\mathbf{k}=\mathbb{C}, \mathbb{R}$.
\begin{enumerate}
		\item 
		Let $n$ be an odd number. Then $\mathbf{k}[R_n]\simeq V_{triv}\oplus\underset{\xi\in \mu_n}{\bigoplus} V_{\xi, \overline{\xi}}$  where the r.h.s. consists of simple right ideals and $V_{\xi, \overline{\xi}}$ are the two-dimensional irreducible representations of $D_n$ spanned by the eigenvectors for the normal subgroup $\mathbb{Z}_n \trianglelefteq D_n$ with eigenvalues $\xi, \overline{\xi}$. 
				\par
		
		\item
		Let $n=2k$ be an even number. Then $\mathbf{k}[R_n]\simeq V_{triv,even}\oplus V_{triv,odd}\oplus\underset{\xi\in \mu_k}{\bigoplus} V_{\xi, \overline{\xi}}^{\oplus 2}$  where the r.h.s. consists of simple right ideals and $V_{\xi, \overline{\xi}}$ are the two-dimensional irreducible representations of $D_k$ spanned by the eigenvectors for the normal subgroup $\mathbb{Z}_k \trianglelefteq D_k$ with eigenvalues $\xi, \overline{\xi}$. The representations  $V_{triv,even}$ and $V_{triv,odd}$ are the trivial one-dimensional representations corresponding to the orbits $X_{even}$ and $X_{odd}$ defined in the proof below.
		
\end{enumerate}

\end{pro}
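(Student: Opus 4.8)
The plan is to translate the statement entirely into the representation theory of dihedral groups, exactly in the spirit of the discussion preceding Corollary~\ref{RightModDecomp}. First I would record that a subspace $I\subseteq V=\mathbf{k}[R_n]$ is a right ideal if and only if it is invariant under every right multiplication $R_x\colon y\mapsto y\rt x=2x-y$, equivalently under the group $Inn(R_n)=\langle \varphi(x)\rangle$; since $\mathbf{k}=\mathbb{C}$ or $\mathbb{R}$ and $Inn(R_n)$ is finite, Maschke's theorem makes $V$ completely reducible, so the minimal (simple) right ideals are exactly the irreducible $Inn(R_n)$-subrepresentations and any decomposition of $V$ into irreducibles is a decomposition into simple right ideals. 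Thus it suffices to compute $Inn(R_n)$, its orbit decomposition of $R_n$, and the irreducible constituents of the associated permutation representation.

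Next I would determine $Inn(R_n)$ explicitly. Composing two reflections gives $R_x R_z(y)=y+2(x-z)$, a translation by an even amount. For $n$ odd, $2$ is invertible modulo $n$, so these translations exhaust $\mathbb{Z}_n$ and, together with $R_0\colon y\mapsto -y$, generate the full dihedral group $D_n$ acting on the $n$ points; in particular $R_n$ is connected. For $n=2k$ even, every $R_x$ preserves the parity of $y$, so $R_n$ splits into the two orbits $X_{even}$ and $X_{odd}$ of size $k$; the generated translations are precisely the even ones, forming a cyclic group of order $k$, and restricting to $X_{even}\cong\mathbb{Z}_k$ (via $2j\leftrightarrow j$) one checks that the translations give all of $\mathbb{Z}_k$ while the reflections $R_x$ induce $j\mapsto (x\bmod k)-j$ for every residue, so the image on each orbit is the full $D_k$.

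I would then decompose the permutation representation of a dihedral group $D_m$ on its $m$ points ($m=n$ in the odd case, $m=k$ on each even orbit) by restricting to the cyclic rotation subgroup. Diagonalizing the generating rotation yields eigenvectors $e_\xi=\sum_j \xi^{-j} j$ for $\xi\in\mu_m$, and the reflection $R_0$ sends $e_\xi\mapsto e_{\overline\xi}$. Hence $e_1$ spans the trivial ideal $V_{triv}=\mathbf{k}\sum_{x} x$, while each conjugate pair $\{\xi,\overline\xi\}$ with $\xi\neq 1$ spans a $D_m$-invariant two-dimensional subspace $V_{\xi,\overline\xi}$ that is irreducible, since the reflection swaps the two rotation eigenlines and no proper invariant subspace survives; it remains irreducible over $\mathbf{k}=\mathbb{R}$ as well. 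A dimension count matches $m$ and confirms that these summands exhaust $V$. For $n$ odd this gives $V_{triv}\oplus\bigoplus_{\xi} V_{\xi,\overline\xi}$; for $n=2k$ the same analysis on $X_{even}$ and $X_{odd}$ produces the two trivial ideals $V_{triv,even},V_{triv,odd}$ and makes each $V_{\xi,\overline\xi}$ appear once per orbit, i.e. with multiplicity two.

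The main points requiring care are in the even case. I expect the chief obstacle to be verifying cleanly that the action on each orbit factors through exactly $D_k$ (neither larger nor smaller), and handling the degenerate eigenvalues: $\xi=1$ always yields a one-dimensional trivial summand rather than a two-dimensional one, and when $k$ is even the self-conjugate eigenvalue $\xi=-1$ yields a one-dimensional non-trivial summand that must be folded into the stated indexing convention for $\bigoplus_{\xi\in\mu_k}V_{\xi,\overline\xi}^{\oplus2}$, where one reads $V_{\xi,\overline\xi}$ over unordered pairs and the self-conjugate values contribute one-dimensional pieces. Once this bookkeeping is settled, the two displayed isomorphisms follow, and every summand is simple because it is an irreducible subrepresentation.
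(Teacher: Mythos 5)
Your argument is correct and follows essentially the same route as the paper: identify simple right ideals with irreducible $Inn(R_n)$-subrepresentations, recognize $Inn(R_n)$ as the dihedral group $D_n$ acting on the $n$ vertices (resp.\ as $D_k$ acting on each of the two parity orbits when $n=2k$), and decompose the resulting permutation representation into the trivial summand(s) plus the two-dimensional spans of conjugate rotation-eigenvectors. You merely supply more detail than the paper does --- notably the explicit computation $R_xR_z(y)=y+2(x-z)$ identifying the group, and the bookkeeping for $\xi=1$ and the self-conjugate eigenvalue --- but the underlying decomposition is identical.
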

\begin{proof}
We start with the case of odd $n$. Notice that $Inn(R_n)$ is a subgroup of $S_n$ isomorphic to the dihedral group $D_n$. Moreover, from multiplication table of $R_n$ we notice that multiplication by $e_i$ on the right correspond to reflection with respect to the line through the vertex  $i$ and the midpoint of the opposite edge. Thus finding the decomposition of $\mathbf{k}[R_n]$ into the sum of  where the r.h.s. consists of simple right ideals is equivalent to decomposing the representation $V=\{v_1,\hdots v_n\}$ given by $g\cdot v_i=v_{g\cdot i}$, where the action on the r.h.s. corresponds to the action of $Inn(R_n)$ on the $i$th vertex of a regular $n$-gon.
\par
Now we consider the case $n=2k$. Let $X_{odd}:=\{e_i\in X\mid i \; \mbox{ is odd}\}$ and $X_{even}:=\{e_i\in X\mid i \; \mbox{ is even}\}$. Note that $X=X_{even}\amalg X_{odd}$ with $|X_{even}|=|X_{odd}|=k$ and $e_i\rt e_j=e_i\rt e_{j+k}$ for any $i,j \in \{1,\hdots, n\}$. 
 The statement for $n=2k$ follows from the observation that the dihedral group of order $k$ (generated by $\varphi(e_1),\hdots,\varphi(e_k) \in S_n$)  acts on each orbit $X_{even}$ and  $X_{odd}$ the same way as described in the case of odd $n$. 
\end{proof}
\par

\par
\begin{rmk}
Let $X=$ Conj$(G)$ be the conjugation quandle on a group $G$, then the problem of decomposition of $\mathbf{k}[X]$ into indecomposable right $\mathbf{k}[X]$-modules is equivalent to decomposing $\mathbf{k}[G]$ with conjugation action into indecomposable representations. This was studied, see for example \cite{Roth, Solomon1, Solomon2}.
\end{rmk}

\section{On isomorphisms of quandle rings.}\label{iso}	
In this section, we investigate the problem of isomorphisms of quandle rings.  We introduce the notion of partition-type of quandles and show that if the quandle rings $\mathbf{k}[X]$ and $\mathbf{k}[Y]$ are isomorphic and the quandles $X$ and $Y$ are orbit $2$-transitive, then $X$ and $Y$ are of the same partition type.  First, we start with the following definition.
	
\begin{df}
Let $X$ be a finite quandle of cardinality $n$. The \textit{partition type} of $X$ is  $\lambda = (\lambda_1, \hdots, \lambda_n)$ with  $\lambda_j$ being the number of orbits of cardinality $j$ in $X$. 
\end{df}	

\begin{ex}
The partition type of the quandle $X=\{e_1,e_2\}\amalg\{e_3,e_4,e_5\}\amalg\{e_6,e_7\}\amalg\{e_8\}$ is $\lambda=1,2,1,0,0,\hdots$.
\end{ex}
\begin{thm}
\label{ISO}
Assume $char(\mathbf{k})\neq 2,3$. If the quandle rings $\mathbf{k}[X]$ and $\mathbf{k}[Y]$ are isomorphic and the quandles $X$ and $Y$ are orbit $2$-transitive  ($G_{X_i}$'s are not among the groups from $(i)-(v)$ in Theorem~\ref{TransClassif} in case $char(\mathbf{k})=p>3$), then $X$ and $Y$ are of the same partition type.
\end{thm}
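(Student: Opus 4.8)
The plan is to extract the partition type of a quandle from two invariants of its quandle ring and to verify that a $\mathbf{k}$-algebra isomorphism preserves both. Throughout I read $\phi\colon\mathbf{k}[X]\to\mathbf{k}[Y]$ as a $\mathbf{k}$-algebra isomorphism, so it is additive, multiplicative and $\mathbf{k}$-linear; in particular it preserves $\mathbf{k}$-dimensions and sends right ideals to right ideals (if $M\cdot\mathbf{k}[X]\subseteq M$ then $\phi(M)\cdot\mathbf{k}[Y]=\phi(M\cdot\mathbf{k}[X])\subseteq\phi(M)$), carrying simple right ideals to simple right ideals.

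The first invariant is dimensional. By Corollary~\ref{RightModDecomp} (part (1) when $char(\mathbf{k})=0$, part (2) when $char(\mathbf{k})=p>3$ under the stated restriction on the $G_{X_i}$), writing $X=X_1\amalg\cdots\amalg X_k$ with $n_i=|X_i|$, we have $\mathbf{k}[X]\simeq\bigoplus_{i=1}^{k}(V^i_{triv}\oplus V^i_{st})$ as a sum of simple right ideals with $\dim V^i_{triv}=1$ and $\dim V^i_{st}=n_i-1$. Let $D_X$ be the multiset of $\mathbf{k}$-dimensions of the simple right ideals in such a decomposition. Since $\mathbf{k}[X]$ is a sum of simple right ideals, $D_X$ is independent of the decomposition chosen, and because $\phi$ matches simple right ideals preserving dimension, $D_X=D_Y$. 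Explicitly $D_X=\{\,1^{(k)}\,\}\cup\{\,n_i-1:n_i\geq 2\,\}$, so writing $m_d$ for the multiplicity of $d$ in $D_X$ we get $m_d=\lambda_{d+1}$ for $d\geq 2$ and $m_1=k+\lambda_2$.

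As the last relation shows, $D_X$ by itself cannot separate the $k$ trivial summands from the one-dimensional standard summands $V^i_{st}$ arising from two-element orbits (e.g. the trivial quandle on three elements and the three-element quandle with one two-element orbit both give $D=\{1,1,1\}$). Hence the second, and main, step is to recover $k$ intrinsically, and I expect this to be the crux: a ring isomorphism need not respect the quandle basis, the augmentation, or the $Inn(X)$-action, so the ``obvious'' description of the trivial isotypic component as a fixed space is not directly transportable. My remedy is the left nucleus
\[
N(A)=\{\,a\in A : (a\cdot\beta)\cdot\gamma=a\cdot(\beta\cdot\gamma)\ \text{for all }\beta,\gamma\in A\,\},
\]
a $\mathbf{k}$-subspace defined purely multiplicatively and therefore satisfying $\phi(N(\mathbf{k}[X]))=N(\mathbf{k}[Y])$. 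To identify it, recall $v\cdot y=R_y(v)$ and the quandle identity $R_{y\rt z}=R_zR_yR_z^{-1}$; testing the defining condition on basis elements $\beta=y,\ \gamma=z\in X$ yields $R_zR_y(v)=R_{y\rt z}(v)=R_zR_yR_z^{-1}(v)$, whence $R_z(v)=v$ for every $z\in X$. Thus $N(\mathbf{k}[X])=V^{Inn(X)}$, the $Inn(X)$-fixed subspace; since the fixed space of a transitive permutation representation is one-dimensional (spanned by the orbit sum) in every characteristic, $\dim_{\mathbf{k}}N(\mathbf{k}[X])=\sum_{i=1}^{k}1=k$.

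Combining the two invariants finishes the argument: from $D$ and $k$ one solves $\lambda_j=m_{j-1}$ for $j\geq 3$, $\lambda_2=m_1-k$, and $\lambda_1=2k-\sum_d m_d$, so the partition type is a function of the pair $(D,k)$. As $\phi$ preserves both $D$ (dimensions of simple right ideals) and $k$ (dimension of the nucleus), $X$ and $Y$ have the same partition type. A final routine point is that the hypothesis $char(\mathbf{k})\neq 2,3$, together with the exclusion of the groups $(i)$–$(v)$ when $char(\mathbf{k})=p>3$, is exactly what makes Corollary~\ref{RightModDecomp} apply and guarantees the clean shape of $D$; the nucleus computation is itself characteristic-free, so no further case analysis is needed there.
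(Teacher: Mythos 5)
Your argument is correct, and its second half is genuinely different from (and stronger than) the paper's own proof. The paper's proof consists essentially of your first step only: it decomposes $\mathbf{k}[X]$ and $\mathbf{k}[Y]$ into simple right ideals via Corollary~\ref{RightModDecomp}, transports the first decomposition through the isomorphism, and invokes the Krull--Schmidt theorem to say the resulting two decompositions of $\mathbf{k}[Y]$ agree up to permutation of summands, concluding $\lambda=\mu$ directly. As you observe, matching the multiset of dimensions of simple summands is not by itself enough to recover the partition type, since a one-dimensional $V^i_{st}$ coming from a two-element orbit has the same dimension as a $V^j_{triv}$ (your comparison of $T_3$ with $\{1,2\}\amalg\{3\}$, both giving $D=\{1,1,1\}$); Krull--Schmidt would separate the two kinds of one-dimensional summands only if one also knew that $\varphi$ carries trivial right modules to trivial right modules, i.e.\ that $\varphi$ respects augmentations, a point the paper's proof does not address. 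Your computation of the left nucleus, $N(\mathbf{k}[X])=V^{Inn(X)}$, whose dimension equals the number of orbits in every characteristic, supplies exactly the missing purely multiplicative invariant, and the bookkeeping recovering $\lambda$ from the pair $(D,k)$ is correct. The one caveat is the one you already flag: like the paper's argument, yours rests entirely on Corollary~\ref{RightModDecomp}, so it inherits those hypotheses; within them, your version is the more complete of the two.
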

\begin{proof}
Let $s$ stand for the number of orbits in $X$, $d$ for the number of orbits in $Y$ and $n$ be the number of elements in $X$ (and $Y$). The partition types of $X$ and $Y$ will be denoted by $\lambda$ and $\mu$. The number of elements in $X_j$ will be denoted by $n_j$.  Corollary~\ref{RightModDecomp} implies $\mathbf{k}[X]\simeq \underset{i=1}{\overset{s}{\bigoplus}}(V^i_{st}\oplus V^i_{triv})$ as the sum of right simple ideals, similarly, $\mathbf{k}[Y]\simeq \underset{j=1}{\overset{d}{\bigoplus}}(W^j_{st}\oplus W^j_{triv})$. Notice that $V^i_{st}=0$, if $|X_i|=0$. The isomorphism $\varphi: \mathbf{k}[X] \rightarrow \mathbf{k}[Y]$ induces another decomposition $\mathbf{k}[Y]=\overset{s}{\underset{j=1}{\oplus}}\varphi(V^i_{st})\oplus \varphi(V^i_{triv})$ with each summand being a simple ideal in $\mathbf{k}[Y]$. The Krull-Schmidt theorem asserts that the decomposition $\mathbf{k}[Y]\simeq \underset{j=1}{\overset{d}{\bigoplus}}(W^j_{st}\oplus W^j_{triv})$ is unique up to permutation of summands, from which we conclude that $\lambda=\mu$.
\end{proof}
\begin{cor}\label{WNPF1}
Let $X$ be a quandle of order 3. Then the three quandle rings arising from $X$ are not pairwise isomorphic. 
\end{cor}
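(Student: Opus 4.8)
The plan is to reduce everything to Theorem~\ref{ISO} by exhibiting the three isomorphism classes of quandles of order $3$, recording their partition types, and checking that each is orbit $2$-transitive. Up to isomorphism the three quandles of order $3$ are: the trivial quandle $T_3$ (with $a\rt b=a$ for all $a,b$); the dihedral quandle $R_3$; and the quandle $X=\{a\}\amalg\{b,c\}$ in which $a$ is a global fixed point, the pair $\{b,c\}$ carries the trivial operation among itself, and $b\rt a=c$, $c\rt a=b$. I would first record that the corresponding orbit decompositions are $\{a\}\amalg\{b\}\amalg\{c\}$, a single orbit of size $3$, and $\{a\}\amalg\{b,c\}$, so that the partition types are, respectively, $\lambda=(3,0,0,\dots)$, $(0,0,1,0,\dots)$ and $(1,1,0,\dots)$, which are pairwise distinct.

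Next I would verify orbit $2$-transitivity in each case. For the singleton orbits occurring in $T_3$ and in $X$ the condition is vacuous. For the two-element orbit of $X$ the right multiplication $R_a$ acts as the transposition $(b\,c)$ while $R_b$ and $R_c$ act trivially, so $G_{\{b,c\}}=\langle(b\,c)\rangle\cong S_2$, which is $2$-transitive on a two-point set. For $R_3$ the three right multiplications are the three transpositions of the symmetric group on $\{0,1,2\}$, whence $Inn(R_3)\cong S_3$; and $S_3$ is (sharply) $2$-transitive on three points. It is worth flagging here the contrast with the Example in the text: although $R_n$ for odd $n>3$ is connected yet not $2$-transitive, the case $n=3$ is genuinely special precisely because $Inn(R_3)$ is the \emph{full} symmetric group. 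Thus all three quandles are orbit $2$-transitive.

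Finally I would apply Theorem~\ref{ISO}. For $char(\mathbf{k})=0$ the hypotheses are met immediately; for $char(\mathbf{k})=p>3$ one checks that the only nontrivial groups occurring, $S_2\cong AGL(1,2)$ on two points and $S_3\cong AGL(1,3)$ on three points, avoid the exceptional families $(i)$--$(v)$ of Theorem~\ref{TransClassif}, since there $p\mid q$ with $q\in\{2,3\}$ would force $p\in\{2,3\}$. Hence Theorem~\ref{ISO} applies to every pair among the three quandles: were two of the associated quandle rings isomorphic, their partition types would have to coincide, contradicting the pairwise distinctness established above. Therefore the three quandle rings are pairwise non-isomorphic. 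I expect the only real work to be the bookkeeping of the three multiplication tables and the orbit-transitivity checks, the single delicate point being the $2$-transitivity of $R_3$; once these are in place the conclusion is a direct contrapositive of Theorem~\ref{ISO} combined with the distinctness of the partition types.
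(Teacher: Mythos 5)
Your proposal is correct and is exactly the argument the paper intends: the corollary is stated as an immediate consequence of Theorem~\ref{ISO}, and you have supplied the (omitted) verification that the three order-$3$ quandles are orbit $2$-transitive with pairwise distinct partition types $(3,0,0)$, $(1,1,0)$, $(0,0,1)$, so the contrapositive of Theorem~\ref{ISO} applies. Your extra check that $S_2$ and $S_3$ avoid the exceptional families of Theorem~\ref{TransClassif} in characteristic $p>3$ is a welcome detail the paper leaves implicit.
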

\begin{cor}
The ring $\mathbf{k}[X]$ with $X$ the trivial quandle is not isomorphic to the quandle ring of any other orbit $2$-transitive quandle. 	
\end{cor}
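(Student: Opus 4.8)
The plan is to deduce this directly from Theorem~\ref{ISO} by computing the partition type of the trivial quandle and checking that no other orbit $2$-transitive quandle can share it. Write $X$ for the trivial quandle on $n$ elements, so that $a\rt b=a$ for all $a,b\in X$.

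First I would verify that $X$ itself falls within the scope of Theorem~\ref{ISO}. Because $x\rt y=x$ for every $y$, each right multiplication $R_y$ is the identity map, so $Inn(X)$ is trivial and every orbit of $X$ is a singleton; hence the partition type of $X$ is $\lambda=(n,0,0,\hdots)$. Each group $G_{X_i}$ is then the trivial group acting on the one-point orbit $X_i$, which is (vacuously) $2$-transitive, so $X$ is orbit $2$-transitive; moreover these trivial groups are certainly not among the groups $(i)-(v)$ of Theorem~\ref{TransClassif}, so the extra caveat needed when $char(\mathbf{k})=p>3$ holds automatically for $X$.

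Next, given an orbit $2$-transitive quandle $Y$ with $\mathbf{k}[Y]\simeq\mathbf{k}[X]$ as $\mathbf{k}$-algebras, comparing ranks over $\mathbf{k}$ forces $|Y|=n$, so the two partition types have the same length and Theorem~\ref{ISO} applies. It yields that $X$ and $Y$ have the same partition type, i.e.\ $Y$ has $n$ orbits, each a singleton.

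The last step, which is the only real content beyond citing Theorem~\ref{ISO}, is the elementary observation that a quandle all of whose orbits are singletons must be trivial: if $\{y\}$ is an orbit then it is fixed by every $R_z$, that is, $y\rt z=y$ for all $z\in Y$, and as $y$ ranges over $Y$ this says the operation on $Y$ is $y\rt z=y$. Hence $Y$ is the trivial quandle on $n$ elements, so $Y\simeq X$, and therefore no orbit $2$-transitive quandle other than $X$ can have ring isomorphic to $\mathbf{k}[X]$. I expect the only point needing care is the degenerate reading of $2$-transitivity on one-point orbits, which is exactly what places $X$ within the hypotheses of Theorem~\ref{ISO}; everything else is a direct application of the previous theorem together with the Krull--Schmidt uniqueness already used there.
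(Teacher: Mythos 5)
Your proof is correct and follows exactly the route the paper intends: the corollary is stated as an immediate consequence of Theorem~\ref{ISO}, using that the trivial quandle has partition type $(n,0,0,\hdots)$ and that a quandle all of whose orbits are singletons is trivial. Your extra care about the vacuous $2$-transitivity of one-point orbits and the rank comparison forcing $|Y|=|X|$ fills in details the paper leaves implicit, but the argument is the same.
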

\begin{rmk}
Actually, the ring $\mathbf{k}[X]$ with $X$ the trivial quandle is not isomorphic to the quandle ring of any other quandle. Indeed, if $\varphi: \mathbf{k}[X]\rightarrow \mathbf{k}[Y]$ is an isomorphism of quandle rings, it induces an isomorphism of augmentation ideals $\tilde{\varphi}: I_X\rightarrow I_Y$. In particular, this implies that $\mathbf{k}[X]$ for the trivial quandle $X$ is not isomorphic to $\mathbf{k}[Y]$ for any other quandle $Y$ as $a\cdot I_X=0$ for any $a\in \mathbf{k}[X]$, but this property does not hold in $\mathbf{k}[Y]$.
\end{rmk}

\begin{pro}\label{WNPF2}
Let $\mathbf{k}=\mathbb{Z}_p$, where $p$ is prime, and $X$ a quandle of order 3. Then the three quandle rings arising from $X$ are not pairwise isomorphic. 
\end{pro}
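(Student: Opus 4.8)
The plan is to separate the three rings by numerical invariants of their augmentation ideals, computed directly over $\mathbb{Z}_p$. First I would fix representatives of the three isomorphism classes of quandles of order $3$ by their right translations: the trivial quandle $T$, with $R_{e_1}=R_{e_2}=R_{e_3}=\mathrm{id}$; the dihedral quandle $R_3$, whose $R_{e_1},R_{e_2},R_{e_3}$ are the three transpositions generating $S_3$ (so $R_3$ is connected); and the remaining quandle $Q$, for which two of the $R_{e_i}$ are the identity and the third is the transposition of the other two basis elements (one orbit of size $2$ and one fixed point). Their partition types are $(3,0,0)$, $(0,0,1)$ and $(1,1,0)$, which are pairwise distinct; since all three are orbit $2$-transitive, the case $char(\mathbf{k})\neq 2,3$ is already contained in Corollary~\ref{WNPF1}. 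The genuinely new primes are $p=2$ and $p=3$, and I would give one uniform argument valid for all $p$.

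The key structural input is that a ring isomorphism $\varphi\colon \mathbf{k}[X]\to\mathbf{k}[Y]$ carries the augmentation ideal $I_X=\ker\varepsilon_X$ onto $I_Y$; this is the content of the Remark preceding this proposition. Alternatively, over $\mathbb{Z}_p$ one checks that $\varepsilon$ is the \emph{only} surjective ring homomorphism $\mathbf{k}[X]\to\mathbb{Z}_p$: each $e_i$ is idempotent, so its image lies in $\{0,1\}$, and the quandle relations force all values to equal $1$ in each of the three cases. Since $\mathbb{Z}_p$ has no nontrivial automorphisms, $\varepsilon_Y\circ\varphi=\varepsilon_X$, whence $\varphi(I_X)=I_Y$. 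Consequently $\varphi$ restricts to an isomorphism of the (non-unital, nonassociative) rings $I_X\cong I_Y$, so the dimensions
\[
\dim_{\mathbb{Z}_p} I_X^2, \qquad \dim_{\mathbb{Z}_p} I_X^3
\]
are isomorphism invariants of $\mathbf{k}[X]$, where $I_X^2$ and $I_X^3$ denote the $\mathbb{Z}_p$-spans of all products of two, resp.\ three, elements of $I_X$ (over all bracketings).

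Then I would compute these two dimensions using the basis $u=e_1-e_3$, $v=e_2-e_3$ of each $I_X$, where the multiplication table supplies $u\cdot u,\ u\cdot v,\ v\cdot u,\ v\cdot v$ as explicit vectors. For $T$ one gets $I_T^2=0$, so $(\dim I^2,\dim I^3)=(0,0)$. For $Q$ one finds $I_Q^2=\mathbb{Z}_p\,w$ with $w=e_1-e_2$, together with $w\cdot u=w\cdot v=2w$ and $u\cdot w=v\cdot w=0$; hence $(\dim I^2,\dim I^3)=(1,1)$ for $p$ odd and $(1,0)$ for $p=2$. For $R_3$ the products $u\cdot u=u-2v$, $u\cdot v=v\cdot u=-u-v$, $v\cdot v=-2u+v$ span $I_{R_3}^2$, and a $2\times2$ minor of their coefficient matrix equals $-3$; thus $I_{R_3}^2=I_{R_3}$ is two-dimensional when $p\neq 3$, giving $(2,2)$, while for $p=3$ all three products become proportional to $u+v$ and every triple product vanishes, giving $(1,0)$.

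Assembling the pairs $(\dim I^2,\dim I^3)$ for $T,Q,R_3$: for $p\geq 5$ they are $(0,0),(1,1),(2,2)$; for $p=2$ they are $(0,0),(1,0),(2,2)$; and for $p=3$ they are $(0,0),(1,1),(1,0)$. In every case the three pairs are distinct, so no two of the rings can be isomorphic. The main obstacle is precisely $p=3$, where $\dim I^2$ alone fails to separate $Q$ from $R_3$ (both equal $1$); this is resolved by the finer invariant $\dim I^3$, which detects that $I_{R_3}$ is nilpotent ($I^3=0$) while $I_Q$ is not ($I^3=I^2\neq 0$, since $2\neq 0$ in $\mathbb{Z}_3$). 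The only other point requiring care is the preservation of the augmentation ideal under isomorphisms, settled in the second paragraph.
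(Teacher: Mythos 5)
Your proof is correct, but it takes a genuinely different route from the paper's. The paper's own argument is more elementary: it counts, for each of the three rings, the number of ``zero columns,'' i.e.\ the cardinality of the set $\{a\in\mathbb{Z}_p[X] : b\cdot a=0 \text{ for all } b\}$, obtaining $p^2$, $p$ and $1$ for the trivial quandle, the two-orbit quandle and $R_3$ respectively; since this set is manifestly preserved by any ring isomorphism, no auxiliary lemma is needed and the three counts are distinct for every prime $p$. You instead use the pair $\bigl(\dim_{\mathbb{Z}_p} I^2,\ \dim_{\mathbb{Z}_p} I^3\bigr)$ of the augmentation ideal, which costs you an extra step --- establishing that an isomorphism carries $I_X$ onto $I_Y$ --- but your justification of that step (each basis idempotent must map to $1$ under any surjective ring homomorphism onto $\mathbb{Z}_p$, so the augmentation is the unique such homomorphism and is therefore intertwined by $\varphi$) is sound and actually supplies a proof of the Remark that the paper states without argument. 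I checked your computations: $I_T^2=0$; for $Q$ one indeed gets $I^2=\mathbb{Z}_p(e_1-e_2)$ with $I^3=\langle 2(e_1-e_2)\rangle$; and for $R_3$ the $2\times 2$ minor $-3$ correctly splits the cases $p\neq 3$ (where $I^2=I$) and $p=3$ (where $I^2=\langle u+v\rangle$ and all triple products are multiples of $3$, hence zero). The resulting triples of invariants are pairwise distinct in every characteristic, with the finer invariant $\dim I^3$ doing the work at $p=3$ exactly as you note. What your approach buys is structural information (nilpotency of $I_{R_3}$ at $p=3$ versus $I_Q^3=I_Q^2\neq 0$) that connects naturally to the paper's Section~\ref{augmentation} on $\Delta^k/\Delta^{k+1}$; what the paper's approach buys is brevity and independence from any statement about the augmentation ideal.
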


\begin{proof}
We count the number of zero columns in the multiplication table of each quandle ring and show that they are different, which implies the rings are not isomorphic.

\textbf{Case 1.} Let $X$ be $T_3$, the trivial quandle.

Let $\alpha_je_1+\beta_je_2+\gamma_je_3$ be an element of $\mathbb{Z}_p[T_3]$ and consider $a_{11}e_1+a_{12}e_2+a_{13}e_3\in \mathbb{Z}_p[T_3]$. Then
$$(\alpha_je_1+\beta_je_2+\gamma_je_3)\cdot (a_{11}e_1+a_{12}e_2+a_{13}e_3)=0\,\,\textnormal{for all}\,\, j$$ 
implies 
$$a_{11}+a_{12}+a_{13}=0.$$ 
So $a_{11}=-a_{12}-a_{13}$. Hence there are $p^2$ zero columns. 

\textbf{Case 2.}  Now let $X=\{1,2\}\ \amalg \{3\}$, the quandle with two orbits.

Let $\alpha_je_1+\beta_je_2+\gamma_je_3$ be an element of $\mathbb{Z}_p[X]$ and consider $a_{11}e_1+a_{12}e_2+a_{13}e_3\in \mathbb{Z}_p[X]$. Then
$$(\alpha_je_1+\beta_je_2+\gamma_je_3)\cdot (a_{11}e_1+a_{12}e_2+a_{13}e_3)=0\,\,\textnormal{for all}\,\, j$$ 
implies 
$$\gamma_j(a_{11}+a_{12}+a_{13})=0,$$
$$\alpha_j(a_{11}+a_{12})e_1+\alpha_ja_{13}e_2=0,$$ 
and
$$\beta_j(a_{11}+a_{12})e_2+\beta_ja_{13}e_1=0.$$

Hence we have $a_{13}=0$ and $a_{11}=-a_{12}$ which implies there are $p$ number of zero columns. 

\textbf{Case 3.}  Now $X=R_3$, the Takasaki quandle.

Let $\alpha_je_1+\beta_je_2+\gamma_je_3$ be an element of $\mathbb{Z}_p[R_3]$ and consider $a_{11}e_1+a_{12}e_2+a_{13}e_3 \in \mathbb{Z}_p[R_3]$. Then
$$(\alpha_je_1+\beta_je_2+\gamma_je_3)\cdot (a_{11}e_1+a_{12}e_2+a_{13}e_3)=0\,\,\textnormal{for all}\,\, j,$$ 
implies 
$$\alpha_j(a_{11}e_1+a_{12}e_3+a_{13}e_2)=0$$
$$\beta_j(a_{11}e_3+a_{12}e_2+a_{13}e_1)=0$$
$$\gamma_j(a_{11}e_2+a_{12}e_1+a_{13}e_3)=0$$
for all $j$. 

Since the system 
$$a_{11}e_1+a_{12}e_3+a_{13}e_2=0$$
$$a_{11}e_3+a_{12}e_2+a_{13}e_1=0$$
$$a_{11}e_2+a_{12}e_1+a_{13}e_3=0$$
is consistent, there is only one solution which is $a_{11}=a_{12}=a_{13}=0$. Thus there is only one zero column. 
\end{proof}

\begin{rmk}
Note that Corollary~\ref{WNPF1} works for all characteristic but 2 and 3, whereas Proposition~\ref{WNPF2} works for all prime characteristic.
\end{rmk}

Next we provide two examples, when the quandle rings $\mathbf{k}[X]$ and $\mathbf{k}[Y]$ are isomorphic, but the quandles $X$ and $Y$ are not, answering  \textit{Question}  $7.4$ of \cite{BPS}.
\begin{ex}
\label{CounterEx1}
Let $\mathbf{k}$ be a field with $char(\mathbf{k})=3$ and quandles $X$ and $Y$ be of cardinality $4$ with multiplication tables as below.

		$$(X,\rt)=\begin{array}{|c|c c c c|} 
		\hline
		\rt \ &  e_1 & e_2 & e_3 & e_4  \\ \hline
		\ e_1 &  e_1 & e_1 & e_2 & e_2  \\
		\ e_2 &  e_2 & e_2 & e_1 & e_1  \\
		\ e_3 &  e_3 & e_3 & e_3 & e_3  \\ 
		\ e_4 &  e_4 & e_4 & e_4 & e_4  \\ \hline
		\end{array}~~~~~~
	\textit{and}\; (Y,\rt)=\begin{array}{|c|c c c c|}
		\hline
		\ \rt &  e'_1 & e'_2 & e'_3 & e'_4  \\ \hline
		\ e'_1 &  e'_1 & e'_1 & e'_2 & e'_1  \\ 
		\ e'_2 &  e'_2 & e'_2 & e'_1 & e'_2  \\ 
		\ e'_3 &  e'_3 & e'_3 & e'_3 & e'_3  \\
		\ e'_4 &  e'_4 & e'_4 & e'_4 & e'_4  \\ \hline
		\end{array}$$
		
\par
The isomorphism  is given by $\varphi:\mathbf{k}[X]\overset{\sim}{\rightarrow}\mathbf{k}[Y]$, where $\varphi=\left(\begin{array}{cccc}
1 & 0 & 0 & 1\\
0 & 1 & 0 & 1\\
0 & 0 & 1 & 1\\
0 & 0 & 0 & 1\\
\end{array}\right)$.
\end{ex}

\par
Generalizing Example~\ref{CounterEx1}, we consider $\tilde{X}=X\amalg\{e_5\}\amalg\{e_6\}\amalg\hdots\amalg\{e_n\}$ and $\tilde{Y}=Y\amalg\{e_5\}\amalg\{e_6\}\amalg\hdots\amalg\{e_n\}$. Clearly the quandles $\tilde{X}$ and $\tilde{Y}$ are not isomorphic. Let $\mathbf{k}$ be a field with $char(\mathbf{k})=p$ with $p \mid n-1$. Then $\varphi:\mathbf{k}[X]\overset{\sim}{\rightarrow}\mathbf{k}[Y]$  given by  $\varphi(e_i) = e'_i $ for $i\neq 4$ and $\varphi(e_4)=\sum\limits_{j=1}^{n}e'_j$  is a ring isomorphism.

\begin{ex}
\label{CounterEx2}
Let $\mathbf{k}$ be a field with $char(\mathbf{k})=0$ and quandles $X$ and $Y$ be of cardinality $7$ with multiplication tables as below.

		$$(X,\rt)=\begin{array}{|c|c c c c c c c|} 
		\hline
		\rt \ &  e_1 & e_2 & e_3 & e_4 & e_5 & e_6 &  e_7 \\ \hline
		\ e_1 &  e_1 & e_1 & e_1 & e_1 & e_2 & e_2 & e_1\\
		\ e_2 &  e_2 & e_2 & e_2 & e_2 & e_1 & e_1 & e_2 \\
		\ e_3 &  e_3 & e_3 & e_3 & e_3 & e_3 & e_4 & e_3 \\ 
		\ e_4 &  e_4 & e_4 & e_4 & e_4 & e_4 & e_3 & e_4 \\
		\ e_5 &  e_5 & e_5 & e_5 & e_5 & e_5 & e_5 & e_5 \\
		\ e_6 &  e_6 & e_6 & e_6 & e_6 & e_6 & e_6 & e_6 \\
		\ e_7 &  e_7 & e_7 & e_7 & e_7 & e_7 & e_7 & e_7\\ \hline
		\end{array}~~~~~~
	\textit{and}\; (Y,\rt)=\begin{array}{|c|c c c c c c c|} 
			\hline
		\rt \ &  e_1 & e_2 & e_3 & e_4 & e_5 & e_6 &  e_7  \\ \hline
		\ e_1 &  e_1 & e_1 & e_1 & e_1 & e_2 & e_1 & e_1\\
		\ e_2 &  e_2 & e_2 & e_2 & e_2 & e_1 & e_2 & e_2 \\
		\ e_3 &  e_3 & e_3 & e_3 & e_3 & e_3 & e_4 & e_3 \\ 
		\ e_4 &  e_4 & e_4 & e_4 & e_4 & e_4 & e_3 & e_4 \\
		\ e_5 &  e_5 & e_5 & e_5 & e_5 & e_5 & e_5 & e_5  \\
		\ e_6 &  e_6 & e_6 & e_6 & e_6 & e_6 & e_6 & e_6 \\
		\ e_7 &  e_7 & e_7 & e_7 & e_7 & e_7 & e_7 & e_7  \\ \hline
		\end{array}$$ 
		
\par
One family of isomorphisms  is given by $\varphi:\mathbf{k}[X]\overset{\sim}{\rightarrow}\mathbf{k}[Y]$, where 
\[\varphi=\left(\begin{array}{ccccccc}
1 & 0 & 0 & 0 & 0 & 0 & 0\\
0 & 1 & 0 & 0 & 0 & 0 & 0\\
0 & 0 & 1 & 0 & 0 & 0 & 0\\
0 & 0 & 0 & 1 & 0 & 0 & 0\\
0 & 0 & 0 & 0 & 1 & 1 & 0\\
0 & 0 & 0 & 0 & 0 & 1 & 0\\
0 & 0 & 0 & 0 & 0 & -1 & 1\\
\end{array}\right).\]
\end{ex}

\par
This example can be generalized. The following definition and proposition can be found in Section $2$ of \cite{Nelson}.
\par

\begin{df}  Let $Q$ be a finite quandle. For any element $x \in Q$, let $c(x) = |\{y \in Q : y \rt x = y\}|$ and let $r(x) = |\{y \in Q : x \rt y = x\}|$ .
Then we define the quandle polynomial of $Q$, $qp_Q(s, t)$, to be $qp_Q(s, t) := \sum\limits_{x\in Q}s^{r(x)}t^{c(x)}$.
\end{df}
\begin{pro}
\label{IsoInvar}
If $Q$ and $Q'$ are isomorphic finite quandles, then $qp_Q(s, t) = qp_{Q'}(s, t)$.
\end{pro}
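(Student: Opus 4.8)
The plan is to show that any quandle isomorphism $\varphi\colon Q \to Q'$ preserves both statistics pointwise, that is $r(x) = r(\varphi(x))$ and $c(x) = c(\varphi(x))$ for every $x \in Q$, and then to reindex the defining sum along the bijection $\varphi$. Since $qp_Q(s,t)$ is a sum of monomials $s^{r(x)} t^{c(x)}$ indexed by the elements of the quandle, pointwise invariance of the exponents together with the bijectivity of $\varphi$ will immediately yield $qp_Q = qp_{Q'}$.

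First I would establish the invariance of $c$. Fix $x \in Q$ and consider the two sets $\{y \in Q : y \rt x = y\}$ and $\{z \in Q' : z \rt \varphi(x) = z\}$, whose cardinalities are $c(x)$ and $c(\varphi(x))$ respectively. I claim $\varphi$ restricts to a bijection between them. If $y \rt x = y$, then applying $\varphi$ and using $\varphi(y \rt x) = \varphi(y) \rt \varphi(x)$ gives $\varphi(y) \rt \varphi(x) = \varphi(y)$, so $\varphi(y)$ lies in the second set; conversely, since $\varphi$ is an isomorphism its inverse $\varphi^{-1}$ is again a quandle homomorphism, and the same computation applied to $\varphi^{-1}$ shows that every $z$ with $z \rt \varphi(x) = z$ is the image of some such $y$. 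Hence the restriction of $\varphi$ is a well-defined bijection and $c(x) = c(\varphi(x))$.

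The identical argument, replacing the fixing condition $y \rt x = y$ by $x \rt y = x$ and $z \rt \varphi(x) = z$ by $\varphi(x) \rt z = \varphi(x)$, shows $r(x) = r(\varphi(x))$. Finally I would reindex the sum defining the polynomial:
\[
qp_{Q'}(s,t) = \sum_{z \in Q'} s^{r(z)} t^{c(z)} = \sum_{x \in Q} s^{r(\varphi(x))} t^{c(\varphi(x))} = \sum_{x \in Q} s^{r(x)} t^{c(x)} = qp_Q(s,t),
\]
where the second equality substitutes $z = \varphi(x)$ (legitimate because $\varphi$ is a bijection) and the third uses the pointwise invariance established above. There is essentially no serious obstacle here: the only point requiring care is verifying that the correspondence between the fixing sets is genuinely a bijection, for which one uses that both $\varphi$ and $\varphi^{-1}$ respect $\rt$; everything else is routine bookkeeping.
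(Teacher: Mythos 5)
Your proof is correct and is the standard argument: a quandle isomorphism carries the fixing sets defining $c(x)$ and $r(x)$ bijectively onto those of $\varphi(x)$, so the exponents are preserved pointwise and reindexing the sum finishes the job. The paper itself offers no proof of this proposition (it simply cites Section~2 of the reference where it appears), so your write-up supplies exactly the routine verification that is being taken for granted.
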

\par
In particular, for the quandles $X$ and $Y$ from Example \ref{CounterEx2}, we have $qp_X(s, t)=s^7(t^7+t^5+t^3)+2s^6t^7+2s^5t^7$  and $qp_Y(s, t)=t^7(4s^6+s^7)+2t^5s^7$, confirming that the two quandles are not isomorphic. Let $Z$ be any finite quandle, consider the quandles $X\amalg Z$ and $Y\amalg Z$ with the operations $x_1\rt x_2$, $y_1\rt y_2 $ and $z_1\rt z_2 $ replicating the ones in $X,Y$ and $Z$ respectively, while $x \rt z =x, z \rt x =z$ and $y \rt z =y, z \rt y =z$ for any $x, x_1, x_2 \in X, y, y_1, y_2 \in Y$ and $z, z_1, z_2 \in Z$. It is easy to see that $qp_{X\amalg Z}(s, t) \neq qp_{Y\amalg Z}(s, t)$, so, it follows from Proposition \ref{IsoInvar} that $X\amalg Z$ and $Y\amalg Z$ are not isomorphic. However, there are  isomorphisms $\mathbf{k}[X\amalg Z]\overset{\sim}{\rightarrow} \mathbf{k}[Y\amalg Z]$ given by the operators written in block form as $\varphi|_{\mathbf{k}[X]\rightarrow \mathbf{k}[Y]}$, $Id|_{ \mathbf{k}[Z]\rightarrow  \mathbf{k}[Z]}$ and $0$ on the remaining two blocks for $\varphi$ from Example \ref{CounterEx2}.
\par
These examples give negative answers to both \textit{Questions} $7.4$ and $7.5$ proposed in \cite{BPS}.  Indeed, it is not hard to see that Example~\ref{CounterEx1} provides a negative answer to \textit{Question $7.5$}. Let $I\subset  X$ and $\tilde{I}\subset Y$ be the augmentation ideals. Then $I^{\geq 2}=\tilde{I}^{\geq 2}=(e_1-e_2)$, hence, $I^{k}/I^{k+1}=\tilde{I}^{k}/\tilde{I}^{k+1}=0$ for $k \geq 2$. Also, $I^{0}/I^{1}\cong\tilde{I}^{0}/\tilde{I}^{1}$ and $I^{1}/I^{2}\cong\tilde{I}^{1}/\tilde{I}^{2}$ are $1$-dimensional and $2$-dimensional vector spaces over $\mathbf{k}$.
 Indeed, let $I\subset  X$ and $\tilde{I}\subset Y$ be the augmentation ideals. Then $I^{\geq 2}=\tilde{I}^{\geq 2}=(e_1-e_2)$, hence, $I^{k}/I^{k+1}=\tilde{I}^{k}/\tilde{I}^{k+1}=0$ for $k \geq 2$. Also, $I^{0}/I^{1}\cong\tilde{I}^{0}/\tilde{I}^{1}$ and $I^{1}/I^{2}\cong\tilde{I}^{1}/\tilde{I}^{2}$ are $1$-dimensional and $2$-dimensional vector spaces over $\mathbf{k}$.
\subsection{Parameter spaces for quandle ring morphisms}
 Let $\mathbf{k}[X]$ and $\mathbf{k}[Y]$ be two quandle rings with $|X|=|Y|=n$. Then $\varphi=\left(\begin{array}{ccc}
a_{11} & \hdots & a_{n1} \\
\vdots & \ddots & \vdots \\
a_{1n} & \hdots & a_{nn}
\end{array}\right)$ defines a map $\mathbf{k}[X]\rightarrow\mathbf{k}[Y]$ if $\varphi(e_ie_j)=\varphi(e_i)\varphi(e_j)$ for all $i,j \in \{1,\hdots,n\}$. This in turn produces $n^3$ quadratic equations in the $n^2$-dimensional vector space of parameters ($a_{ij}$'s). Furthermore, $\varphi$ is an isomorphism if the corresponding matrix is of full rank. We provide one possible application. 
\par
\begin{ex}
 Let $X_1,X_2$ and $X_3$ be quandles of cardinality one. We show that the rings $\mathbf{k}[X_1]\oplus\mathbf{k}[X_2]\oplus\mathbf{k}[X_3]$ and $\mathbf{k}[X_1\amalg X_2\amalg X_3]$ are not isomorphic. Indeed the conditions $\varphi^2(e_i)=\varphi(e_i)$ and $\varphi(e_ie_j)=\varphi(e_i)\varphi(e_j)=0$ give rise to the equations 
\begin{equation*}
 \begin{cases} a_{11}=a_{11}(a_{11}+a_{12}+a_{13}) \\ a_{12}=a_{12}(a_{11}+a_{12}+a_{13})  \\a_{13}=a_{13}(a_{11}+a_{12}+a_{13}) \end{cases}, \\ \\ \begin{cases} a_{21}=a_{21}(a_{21}+a_{22}+a_{23}) \\ a_{22}=a_{22}(a_{21}+a_{22}+a_{23})  \\a_{23}=a_{23}(a_{21}+a_{22}+a_{23}) \end{cases},
 \end{equation*}
  \begin{equation*}
  \begin{cases} a_{31}=a_{31}(a_{31}+a_{32}+a_{33}) \\ a_{32}=a_{32}(a_{31}+a_{32}+a_{33})  \\a_{33}=a_{33}(a_{31}+a_{32}+a_{33}) \end{cases}, \\ \\ \begin{cases} 0=a_{11}(a_{21}+a_{22}+a_{23}) \\ 0=a_{12}(a_{21}+a_{22}+a_{23})  \\0=a_{13}(a_{21}+a_{22}+a_{23}) \end{cases}, \\ \hdots 
  \end{equation*}
\par
We show that the system is already inconsistent. As $rk(\varphi)=3$, to satisfy the first nine equations, we must have $a_{11}+a_{12}+a_{13}=a_{21}+a_{22}+a_{23}=a_{31}+a_{32}+a_{33}=1$, however, this leaves the only possibility $a_{11}=a_{12}=a_{13}=0$ for the remaining three equations, which contradicts the assumption on the rank of  $\varphi$.	
\end{ex}

\section{A proof of conjecture 6.5 in \cite{BPS} } \label{augmentation}

The goal of this section is to give a solution to \textit{conjecture 6.5} in \cite{BPS} concerning the quotient of the powers of the augmentation ideal of the quandle ring of dihedral quandles.

In this Section, we confirm one of the conjectures suggested in \cite{BPS} and present the progress of another conjecture.   First we state the conjecture as presented in \cite{BPS}. 
\begin{conj}
Let $R_n$ be the dihedral quandle.
\begin{enumerate}
\item If $n>1$ is an odd integer, then $\Delta^k(R_n) / \Delta^{k+1}(R_n) \cong \mathbb{Z}_n$ for all $k \geq 1$.
\item If $n>2$ is an even integer, then $|\Delta^k(R_n) / \Delta^{k+1}(R_n)| = n$ for all $k \geq 2$.
\end{enumerate}
\end{conj}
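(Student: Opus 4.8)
\emph{Setup and the key reduction.} Write $R_n=\{e_0,\dots,e_{n-1}\}$ with $e_i\rt e_j=e_{2j-i}$, let $\varepsilon\colon\Z[R_n]\to\Z$ be the augmentation, $\Delta=\Delta(R_n)=\ker\varepsilon$, and read the powers as $\Delta^{k+1}=\Delta^k\cdot\Delta$. The plan is to replace the nonassociative multiplication by a single linear operator. Each right multiplication $R_{e_j}$ is the reflection $\rho_j\colon e_i\mapsto e_{2j-i}$, and $\rho_j=\tau^{\,j}\rho_0$ where $\tau\colon e_i\mapsto e_{i+2}$ is the shift by two and $\rho_0\colon e_i\mapsto e_{-i}$. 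Hence for $z\in\Delta$ and $y=\sum_j b_je_j\in\Delta$ one has $z\cdot y=\bigl(\sum_j b_j\tau^{\,j}\bigr)\rho_0(z)$, and since $\sum_j b_j=0$ the operator $\sum_j b_j\tau^{\,j}$ ranges exactly over the ideal $(\tau-1)\Z[\tau]$. Using $\rho_0(\tau-1)\rho_0=\tau^{-1}-1$ together with the fact that $\tau$ and $\rho_0$ preserve $\Delta$, a short induction yields the clean formula $\Delta^k=(\tau-1)^{k-1}\Delta$ for all $k\ge1$, which turns the whole problem into the study of the $(\tau-1)$-adic behaviour of the lattice $\Delta$.

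\emph{The odd case (part (1)).} Here $\tau$ has order $n$ and acts as the regular representation of $\Z_n$, so $\Z[R_n]\cong\Z[\Z_n]$ as $\langle\tau\rangle$-modules, $\Delta$ is the augmentation ideal $I$ of $\Z[\Z_n]$, and $\Delta^k=(\tau-1)^{k-1}I=I^k$. It therefore suffices to prove $I^k/I^{k+1}\cong\Z_n$. I would use the presentation $\Z[\Z_n]\cong\Z[s]/(h(s))$ with $s=\tau-1$ and $h(s)=(1+s)^n-1=ns+\binom{n}{2}s^2+\cdots+s^n$. Then $I^k=(s^k)$, so $I^k/I^{k+1}$ is cyclic, generated by the class of $s^k$; from $h=0$ one reads $ns\in(s^2)$, whence the order divides $n$. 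Conversely, if $d\,s^k\in(s^{k+1})$, then $s^k\bigl(d-sq(s)\bigr)\in(h)=(s\,g(s))$ with $g(0)=n$; cancelling powers of $s$ in the domain $\Z[s]$ and repeatedly evaluating at $s=0$ forces $n\mid d$. Thus the order is exactly $n$ and $I^k/I^{k+1}\cong\Z_n$, giving part (1).

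\emph{The even case (part (2)).} Now $n=2k$, the element $\tau$ has order $k$, and $R_n$ splits into the two orbits $X_{even},X_{odd}$, so $\Z[R_n]=V_{even}\oplus V_{odd}$ with each $V_\bullet\cong\Z[\Z_k]$ as a $\langle\tau\rangle$-module. The decisive new feature is that $\Delta$ is strictly larger than $(\tau-1)\Z[R_n]=I_{even}\oplus I_{odd}$: the vector $w_{even}-w_{odd}$ (difference of the two orbit sums) lies in $\Delta$ but is annihilated by $\tau-1$. I would organize the computation around the relative augmentations $\bar\varepsilon_\bullet\colon I_\bullet\to\Z_k$, $\bar\varepsilon_\bullet((\tau-1)u)=\varepsilon(u)\bmod k$, which are well defined because $\ker(\tau-1)=\langle w_\bullet\rangle$ meets $I_\bullet$ trivially; this identifies $(\tau-1)\Delta$ with the index-$k$ sublattice $\{(x,y):\bar\varepsilon_{even}(x)+\bar\varepsilon_{odd}(y)=0\}$ of $I_{even}\oplus I_{odd}$. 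Since $s=\tau-1$ is injective on each $I_\bullet^{\,j}$ ($j\ge1$), the same constraint descends uniquely through the filtration, giving $[\,I_\bullet^{\,j-1}\!\oplus I_\bullet^{\,j-1}:s^{\,j}\Delta\,]=k$ for every $j\ge1$, where $s^j\Delta$ is sandwiched between $I_\bullet^{\,j}$ and $I_\bullet^{\,j-1}$ on each orbit. Combining this with the cyclic-group input $|I_\bullet^{\,j}/I_\bullet^{\,j+1}|=k$ and multiplicativity of indices then computes $|\Delta^k/\Delta^{k+1}|$ for $k\ge2$ as a product of the two orbit contributions and the diagonal coupling.

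\emph{The main obstacle.} The crux is exactly this coupling in the even case: the two orbit filtrations are linked only through the single global augmentation, and the diagonal term $w_{even}-w_{odd}$ must be tracked with care. This is the step I would scrutinize, because it is what determines the final order. Running the index bookkeeping above yields $|\Delta^k/\Delta^{k+1}|=(n/2)^2$ for $k\ge2$, which agrees with the conjectured value $n$ only for $n=4$ (where $(n/2)^2=n$) and diverges from it already at $n=6$. I therefore expect the honest outcome to be a complete proof of part (1), together with the \emph{corrected} exact formula $|\Delta^k/\Delta^{k+1}|=(n/2)^2$ in part (2); the apparent coincidence at $n=4$ is not representative, and confirming this discrepancy carefully is precisely where the difficulty—and the ``progress'' on the conjecture—lies.
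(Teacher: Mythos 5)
Your reduction is sound, and for part (1) it takes a genuinely different route from the paper. The paper argues entirely inside $\Delta(R_n)$ by hand: it derives the congruences $e_{2i}\equiv -e_{n-2i}$ and $e_j\equiv je_1$ modulo $\Delta^2$, deduces $ne_1\equiv 0$ to get the base case $\Delta/\Delta^2\cong\mathbb{Z}_n$, and then runs an induction in which a generator $\alpha$ of $\Delta^k/\Delta^{k+1}$ is multiplied by the $e_i$ to generate the next quotient. Your route --- writing $R_{e_j}=\tau^{j}\rho_0$, proving $\Delta^{k}=(\tau-1)^{k-1}\Delta$ via the conjugation identity $\rho_0(\tau-1)\rho_0=\tau^{-1}-1$, identifying $\Delta$ with the augmentation ideal $I$ of $\mathbb{Z}[\mathbb{Z}_n]$ when $n$ is odd, and computing $I^k/I^{k+1}\cong\mathbb{Z}_n$ inside $\mathbb{Z}[s]/((1+s)^n-1)$ --- is more conceptual: it explains all the quotients at once through a single linear operator, and your lower bound (the evaluation-at-$s=0$ argument showing the order is exactly $n$, not merely a divisor of $n$) is actually cleaner than the paper's induction step. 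The price is the need to check that $\tau$ and $\rho_0$ preserve each $\Delta^k$, which your identity supplies.

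On part (2), be aware that the paper does \emph{not} prove it: for even $n$ it only establishes the $k=1$ statement $\Delta(R_n)/\Delta^2(R_n)\cong\mathbb{Z}\oplus\mathbb{Z}_{n/2}$ (a generalization of Proposition~6.3(1) of \cite{BPS}) and leaves $k\ge 2$ untouched, so you are not contradicting anything the paper claims. Your corrected value $(n/2)^2$ checks out. For $n=6$ one computes that $\Delta^2=(\tau-1)\Delta$ has $\mathbb{Z}$-basis $(\tau-1)e_1,\dots,(\tau-1)e_4$, and the matrix of $\tau-1$ in this basis has determinant $9$ with Smith form $(1,1,3,3)$, so $\Delta^2/\Delta^3\cong\mathbb{Z}_3\oplus\mathbb{Z}_3$ has order $9\neq 6$; in general $\tau-1$ is injective on $\Delta^2\subseteq I_{\mathrm{even}}\oplus I_{\mathrm{odd}}$ and acts there with determinant $\pm(n/2)^2$, so every quotient with $k\ge 2$ has order $(n/2)^2$, and $n=4$ is exactly the accidental coincidence $(n/2)^2=n$. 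Two points to tighten in your write-up: replace the ``relative augmentation'' index bookkeeping by this one-line determinant computation (the coupling through $w_{\mathrm{even}}-w_{\mathrm{odd}}$ drops out once you note $\ker(\tau-1)\cap\Delta^2=0$, so the diagonal term never enters for $k\ge2$), and state explicitly the convention $\Delta^{k+1}=\Delta^k\cdot\Delta$ you are using, since the ring is nonassociative and one should check (as one can, at least for small $n$) that $\Delta\cdot\Delta^k$ gives the same lattice.
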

\bigskip

We prove the first part of the above conjecture. We slightly abuse the notation and write $e_i$ for $[e_i]$. 

\begin{thm}
Let $n$ be odd. Then $\Delta^k(R_n) / \Delta^{k+1}(R_n) \cong \mathbb{Z}_n$ for all $k \geq 1$. 
\end{thm}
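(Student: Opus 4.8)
The plan is to work over $\mathbf{k}=\mathbb{Z}$ (so that $\mathbb{Z}_n$ can occur as a quotient) and to study the augmentation ideal $\Delta=\Delta(R_n)=\ker\big(\varepsilon\colon \mathbb{Z}[R_n]\to\mathbb{Z},\ \sum a_ie_i\mapsto\sum a_i\big)$. Since $\varepsilon(x\cdot y)=\varepsilon(x)\varepsilon(y)$, the set $\Delta$ is a two-sided ideal, so every power $\Delta^k$ sits inside $\Delta$. A first reduction is to show all the quotients are finite, i.e. that $\Delta^{k}\otimes\mathbb{Q}=\Delta\otimes\mathbb{Q}$ for every $k$. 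I would prove this by diagonalizing right multiplication in the character basis $f_m=\sum_{i\in\mathbb{Z}_n}\zeta^{mi}e_i$, where $\zeta=e^{2\pi\sqrt{-1}/n}$: a direct computation gives $f_m\cdot f_{m'}=n\,f_{-m}$ when $m'\equiv -2m\pmod n$ and $0$ otherwise. Because $n$ is odd, $m\mapsto -2m$ is a bijection of $\mathbb{Z}_n\setminus\{0\}$, so the products $f_m\cdot f_{-2m}$ already recover every $f_{-m}$ with $m\neq 0$; hence $\Delta^{2}\otimes\mathbb{Q}=\Delta\otimes\mathbb{Q}$, and inductively $\Delta^{k}\otimes\mathbb{Q}=\Delta\otimes\mathbb{Q}$. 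In particular each $\Delta^{k}/\Delta^{k+1}$ is finite, and it suffices to show it has order $n$ and is cyclic.

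Next I would introduce the moment maps $\mu_s\colon \mathbb{Z}[R_n]\to\mathbb{Z}_n$, $\mu_s(\sum a_ie_i)=\sum_i a_i\,i^{\,s}\bmod n$, which are homomorphisms of abelian groups. Expanding $(2j-i)^s$ by the binomial theorem yields the key identity
$$\mu_s(x\cdot y)=\sum_{t=0}^{s}\binom{s}{t}2^{t}(-1)^{s-t}\,\mu_{s-t}(x)\,\mu_t(y).$$
Since $\mu_0|_\Delta=0$, an induction on $k$ shows that $\mu_s$ vanishes on $\Delta^{k}$ for every $s<k$, so $\mu_k$ descends to a map $\Delta^{k}/\Delta^{k+1}\to\mathbb{Z}_n$. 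For the base case this gives a surjection $\Delta/\Delta^{2}\to\mathbb{Z}_n$ (as $\mu_1(e_1-e_0)=1$), and $\Delta/\Delta^{2}\cong\mathbb{Z}_n$ follows once I check the reverse inclusion $\ker(\mu_1|_\Delta)\subseteq\Delta^{2}$: a finite explicit verification that the generators $n(e_1-e_0)$ and the second differences $e_a-2e_b+e_c$ (for $a+c\equiv 2b$) of $\ker(\mu_1|_\Delta)$ are genuine products of two elements of $\Delta$.

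The inductive step $\Delta^{k}/\Delta^{k+1}\cong\mathbb{Z}_n$ is where the difficulty concentrates, and I expect this to be the main obstacle. It cannot be read off from the moment maps alone: the identity above shows that on a $k$-fold product $\mu_k$ always carries binomial and power-of-two prefactors, so for $n$ with a small prime factor (for instance $n=9$, $k=3$, where every product lands in $3\mathbb{Z}_9$) the image $\mu_k(\Delta^{k})$ is a proper subgroup of $\mathbb{Z}_n$. Thus $\mu_k$ detects only part of the quotient, and the genuine isomorphism must instead come from controlling the integral lattices $\Delta^{k}\subset\Delta$ directly. My plan here is to decompose $\Delta\otimes\mathbb{Q}$ into the two-dimensional isotypic pieces $V_{\xi,\overline{\xi}}$ of the $D_n$-action (as in the Proposition on $\mathbb{C}[R_n]$ above), intersect each with the integral lattice, and track how multiplication scales it: the character computation shows that two successive multiplications carry the $\{\pm m\}$-component into $n$ times itself rationally, and the point is to prove that integrally the index contributed per step, after assembling the components by the Chinese Remainder Theorem, is exactly $n$.

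Establishing this — equivalently, that $[\Delta^{k}:\Delta^{k+1}]=n$ with cyclic quotient for every $k\geq 1$ — is the crux, precisely because the clean diagonalization above is only valid after inverting $n$, so the primes $p\mid n$ (and their ramification in $\mathbb{Z}[\zeta]$) must be analyzed separately and shown to contribute cyclically. I anticipate the cyclicity following from $D_n$-equivariance together with the fact that each quotient is generated by the single class of a fixed iterated product such as $(\cdots((e_1-e_0)\cdot(e_1-e_0))\cdots)$, while the order-$n$ statement reduces to a local index computation at each $p\mid n$. Once the index and cyclicity are in hand, combining them with the surjection furnished by $\mu_k$ (on the part it sees) and the lattice generators (on the part it misses) completes the identification $\Delta^{k}/\Delta^{k+1}\cong\mathbb{Z}_n$ for all $k\geq 1$.
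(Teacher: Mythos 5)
Your preliminary steps are sound: the character computation $f_m\cdot f_{-2m}=n\,f_{-m}$ does show $\Delta^k\otimes\mathbb{Q}=\Delta\otimes\mathbb{Q}$ for odd $n$, the moment-map identity is correct and does make $\mu_k$ descend to $\Delta^k/\Delta^{k+1}$, and the base case $\Delta/\Delta^2\cong\mathbb{Z}_n$ is reducible to the finite verification you describe. You are also right that $\mu_k$ cannot close the induction (for $n=9$, $k=3$ the image of $\mu_3$ on $\Delta^3$ is indeed the proper subgroup $3\mathbb{Z}_9$). But that is exactly where the proof stops being a proof: everything from ``My plan here is'' onward is a programme, not an argument. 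You never establish that $[\Delta^k:\Delta^{k+1}]=n$ with cyclic quotient for $k\ge 2$; you only announce that a $D_n$-equivariant lattice-index computation, localized at the primes dividing $n$ and their ramification in $\mathbb{Z}[\zeta]$, should give it. Since the theorem \emph{is} the assertion for all $k\ge 1$, the inductive step is a genuine gap, and the route you propose for filling it is far heavier than what is needed.

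The step you are missing has an elementary mechanism, and it is the one the paper uses: the relations you already extract in your base case, namely $e_i-i\,e_1\in\Delta^2$ for all $i$ and $n\,e_1\in\Delta^2$, propagate to every level simply by multiplying them by a generator $\alpha$ of $\Delta^k/\Delta^{k+1}$. This shows each product $e_i\cdot\alpha$ is congruent to $i\,(e_1\cdot\alpha)$ modulo $\Delta^{k+2}$ and that $n\,(e_1\cdot\alpha)\in\Delta^{k+2}$, so $\Delta^{k+1}/\Delta^{k+2}$ is cyclic, generated by the class of $e_1\cdot\alpha$, of order dividing $n$ --- no isotypic decomposition, no Chinese Remainder Theorem, no cyclotomic ramification. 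What this elementary argument (and, strictly speaking, the paper's own write-up) still leaves to be justified is the lower bound, i.e.\ that the order is exactly $n$ and not a proper divisor; your $\mu_k$ supplies it only when the accumulated binomial and power-of-two factors are prime to $n$. So to repair your proposal you should replace the projected lattice/local analysis by the multiplication-by-$\alpha$ argument for the upper bound and cyclicity, and then address the lower bound separately rather than leaving both to an anticipated index computation.
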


\begin{proof} We prove the result by induction on $k$. 
First we show that $\Delta(R_n)/\Delta^2(R_n)$ is generated as an abelian group by $e_1$ and $\Delta(R_n) / \Delta^2(R_n) \cong \mathbb{Z}_n$.
Consider the integral quandle ring of the dihedral quandle $R_n= \{a_0, a_1, a_2, \ldots, a_{n-1}\}$. Let $e_1= a_1-a_0,\,e_2=a_2-a_0,\,\ldots , e_{n-1}=a_{n-1}-a_0$.
Then  $\Delta(R_n)= \langle e_1, e_2, \ldots, e_{n-1} \rangle $. 
The abelian group $\Delta^2(R_n)$ is generated by the products $e_i  \rt e_j$.  

It is clear that $e_{2i}=-e_{n-2i}$, where $1\leq i\leq \frac{n-1}{2}$. In particular, when $i=\frac{n-1}{2}$ we have $e_1=-e_{n-1}$. Since $e_1=-e_{n-1}$, we have the following.

$e_1\rt e_1=0$ implies $e_2=2e_1$, and

$e_{n-1}\rt e_1=0$ implies $e_3=3e_1$. 

Continuing in this manner in the first column from bottom to top, we get $ne_1=0$. Hence $\Delta/\Delta^2(R_n)$ is generated as an abelian group by $e_1$ and $\Delta(R_n) / \Delta^2(R_n) \cong \mathbb{Z}_n$. 

Now assume that $\Delta^k(R_n) / \Delta^{k+1}(R_n) \cong \mathbb{Z}_n$. We show that $\Delta^{k+1}(R_n) / \Delta^{k+2}(R_n) \cong \mathbb{Z}_n$. 

Let $[\alpha]$ be a generator of $\Delta^k(R_n) / \Delta^{k+1}(R_n)$, i.e. $\Delta^k(R_n) / \Delta^{k+1}(R_n)=\{\alpha, 2\alpha, 3\alpha, \ldots, (n-1)\alpha, n\alpha\}$. Note that the elements $e_i\alpha$, where $1\leq i\leq n-1$, generate $\Delta^{k+1}(R_n) / \Delta^{k+2}(R_n)$. Since $e_k=ke_1$ for $1\leq k\leq n-1$, the above set is generated by $e_1\alpha$. Recall that $ne_1=0$. Thus we have
$$\Delta^{k+1}(R_n) / \Delta^{k+2}(R_n) \cong \mathbb{Z}_n.$$ 

\end{proof}

Now we present a generalization of \cite[Proposition~6.3 (1)]{BPS} where $n>2$ is even. We will use the following equation (see \eqref{neranga1} in Case 2 in appendix) in the proof of Theorem~\ref{neranga3}.

\begin{equation}\label{neranga2}
e_{i} \rt e_{1}=-e_{2}-e_{n-i}+e_{n-i+2},\,\,\,\,\textnormal{for}\,\, 3\leq i\leq n-3.
\end{equation}

\begin{thm}\label{neranga3}
Let $n=2k$ for some positive integer $k$. Then $\Delta(\R_n) / \Delta^2(\R_n)\cong \mathbb{Z}\oplus \mathbb{Z}_{k}$. 
\end{thm}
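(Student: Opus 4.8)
The plan is to realize $\Delta(R_n)/\Delta^2(R_n)$ as the cokernel of the integer relation matrix coming from the generators of $\Delta^2(R_n)$ and to compute it by hand. Write $R_n=\{a_0,\dots,a_{n-1}\}$ with $a_i\rt a_j=a_{2j-i}$, and set $e_m=a_m-a_0$, so that $e_0=0$ and $\Delta(R_n)$ is free abelian on $e_1,\dots,e_{n-1}$. The first step is to record the generator products: bilinearity of the quandle-ring multiplication gives
$$e_i\rt e_j=e_{2j-i}-e_{2j}-e_{n-i}\qquad(\text{indices mod }n),$$
so, since $\Delta^2(R_n)$ is the abelian group generated by the $e_i\rt e_j$, each product yields a relation $e_{2j-i}\equiv e_{2j}+e_{n-i}$ in the quotient. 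The case $j=1$ of this formula is precisely \eqref{neranga2}.

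Next I would reduce all generators to $\Z e_1+\Z e_2$, sorting the relations by the parity of $i$. For $i$ even all three indices $2j-i,\,2j,\,n-i$ are even; writing $E(b):=e_{2b}$ (well defined for $b\in\Z/k$), these relations become $E(j-a)=E(j)+E(-a)$, i.e. $E$ is a group homomorphism $\Z/k\to\Delta/\Delta^2$. Hence $e_{2b}\equiv b\,e_2$, and the wrap-around $e_{2k}=e_0=0$ gives $k\,e_2\equiv 0$. For $i$ odd the relations tie the odd-indexed generators to the even ones and collapse to $e_{2c+1}\equiv e_1+c\,e_2$; concretely this is the recursion $e_{m+2}\equiv e_m+e_2$ already visible in \eqref{neranga2}. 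Consequently $\Delta/\Delta^2$ is generated by $e_1$ and $e_2$ subject to $k\,e_2=0$, producing a surjection $\pi\colon\Z\oplus\Z_k\twoheadrightarrow\Delta(R_n)/\Delta^2(R_n)$ with $(1,0)\mapsto e_1$ and $(0,1)\mapsto e_2$.

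The final and most delicate step is to prove $\pi$ is injective, equivalently that no relation beyond $k\,e_2=0$ is imposed and that $e_1$ is non-torsion. For this I would construct a splitting by defining $F\colon\Delta\to\Z\oplus\Z_k$ on the free generators by $F(e_m)=\big(\operatorname{par}(m),\ \lfloor m/2\rfloor\bmod k\big)$, where $\operatorname{par}(m)\in\{0,1\}$, and checking that $F$ annihilates every $e_{2j-i}-e_{2j}-e_{n-i}$. The first coordinate vanishes at once because $n$ is even, so $2j-i$ and $n-i$ share the parity of $i$. The second coordinate is the only genuinely technical point: $\lfloor m/2\rfloor$ is not invariant under $m\mapsto m+n$, but it shifts by $n/2=k\equiv 0\pmod k$, so $\lfloor m/2\rfloor\bmod k$ is a well-defined function on $\Z/n$, and using $\lfloor m/2\rfloor=(m-\operatorname{par}(m))/2$ one finds the alternating sum equals $-n/2\equiv 0\pmod k$. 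Thus $F$ descends to $\bar F$ with $\bar F(e_1)=(1,0)$ and $\bar F(e_2)=(0,1)$, so $\bar F\circ\pi=\operatorname{id}$ and $\pi$ is an isomorphism, giving $\Delta(R_n)/\Delta^2(R_n)\cong\Z\oplus\Z_k$. I expect this well-definedness check (the floor/mod-$n$ bookkeeping) to be the main obstacle, the reduction in the previous step being a direct consequence of \eqref{neranga2}.
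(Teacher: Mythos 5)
Your proof is correct, and its skeleton matches the paper's: both arguments reduce every generator to $e_1$ and $e_2$ via the recursion $e_{l+2}\equiv e_l+e_2$ extracted from the $j=1$ products (the paper's claim \eqref{Star} is exactly your $e_{2b}\equiv b\,e_2$, $e_{2c+1}\equiv e_1+c\,e_2$), and both then run through the full family $e_i\rt e_j=e_{2j-i}-e_{2j}-e_{n-i}$ split by the parity of $i$; your identity $\lfloor(2j-i)/2\rfloor-j-\lfloor(n-i)/2\rfloor=-k$ is literally the paper's case-by-case computation that every product reduces to $k\,e_2$. Where you genuinely diverge is in how the absence of further relations is certified. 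The paper argues somewhat informally that $\Delta^2(R_n)$ is ``generated by relations from \eqref{Star}'' and reads off from this that the subgroup generated by $e_1$ is torsion-free, whereas you package the same arithmetic into an explicit retraction $\bar F\colon \Delta(R_n)/\Delta^2(R_n)\to\mathbb{Z}\oplus\mathbb{Z}_k$ splitting the surjection $\pi$, with the observation that $m\mapsto\lfloor m/2\rfloor\bmod k$ descends to $\mathbb{Z}/n$ precisely because $n=2k$. This buys an airtight injectivity step (the weakest point of the published proof) at the cost of the floor-function bookkeeping you flag; the two routes are the same Smith-normal-form computation viewed from opposite ends, rewriting the relation subgroup versus mapping out of the quotient.
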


\begin{proof}
We adhere to the following strategy: 
\begin{enumerate}
\item [(1)] 
Show that $\Delta(R_n) / \Delta^{2}(R_n)$ is generated by the classes of $e_1$ and $e_2$
\item [(2)] 
 Verify that $e_{2s}=se_2$ for $2\leq s\leq k-1$. It follows that the abelian subgroup generated by $e_2$ is $\mathbb{Z}_k$. 
 \item [(3)] 
 Check that the abelian subgroup generated by $e_1$ has no torsion.
 \end{enumerate}
 \par
 
First we show that (1) holds. For this we claim that
 
\begin{equation}\label{Star}
e_l = \left\{
        \begin{array}{ll}
           \frac{l}{2}\,e_2 &  \textnormal{if}\,\,l\,\,\textnormal{is even},\\[0.3cm]
            \lfloor \frac{l}{2} \rfloor\,e_2+e_1 &  \textnormal{if}\,\,l\,\,\textnormal{is odd}.
        \end{array}
    \right.
\end{equation}
 
Let $l>3$ be even. We prove by induction on $l$. 

Let $l=4$. Then we have $e_4=2e_2$ which is true since $0=e_{n-2} \rt e_1=-2e_2+e_4$. Assume that it is true for $l$ and consider $l+1$. Let $i=n-l$ in \eqref{neranga2} (consider the first column in table similarly in the case  $n\equiv 0\pmod{4}$). 

Then we have 
$$e_{n-l} \rt e_{1}=-e_{2}-e_{l}+e_{l+2}.$$

$e_{n-l} \rt e_{1}=0$ implies $e_{l+2}=e_l+e_2$. From inductive assumption we have
$$e_{l+2}=\frac{l}{2}\,e_2+e_2=\Big(\frac{l+2}{2}\Big)\,e_2.$$

Let $l$ be odd. The by Division Algorithm we have
$$l=\lfloor \frac{l}{2} \rfloor\,2+1,$$
which implies $l-1$ is even. Then we have $e_{l-1}=e_{\lfloor \frac{l}{2} \rfloor\,\cdot 2}=\lfloor \frac{l}{2} \rfloor\,e_2$.

We show that for $3\leq l\leq n-1$ we have
$$e_l=e_{l-1}+e_1$$
by induction on $l$. 

When $l=3$, we have $e_3=e_2+e_1$ which is true since $0=e_{n-1} \rt e_1=-e_1-e_2+e_3=0$.

Assume that it is true for $l$. When $l+2$, let $i=n-l$ in \eqref{neranga2} (consider the first column in table similarly in the case  $n\equiv 0\pmod{4}$). 

Then we have 
$$e_{n-l} \rt e_{1}=-e_{2}-e_{l}+e_{l+2}.$$

$e_{n-l} \rt e_{1}=0$ implies $e_{l+2}=e_l+e_2$. From inductive assumption we have
$$e_{l+2}=e_{l-1}+e_1+e_2=\Big(\frac{l-1}{2}\Big)\,e_2+e_1+e_2=\Big(\frac{l+1}{2}\Big)\,e_2+e_1=\lfloor \frac{l+2}{2} \rfloor\,e_2+e_1.$$

This completes the claim. Now consider

$e_i \rt e_j=(a_i-a_0)(a_j-a_0)=a_{2j-i}-a_{n-i}-a_{2j}+a_0=-e_{2j-i}+e_{n-i}+e_{2j}$.

Using the above claim we show that $\Delta^2(R_n)$, which consists of $e_i\rt e_j$, for all $1\leq i,j\leq n-1$, is generated by relations from \eqref{Star}. This proves that the abelian group generated by $e_1$ is torsion free. 

Let $i$ be even. 

\[
\begin{split}
e_i \rt e_j&=-e_{2j-i}+e_{n-i}+e_{2j}\cr
&=-\Big(\frac{2j-i}{2}\Big)\,e_2+\Big(\frac{n-i}{2}\Big)\,e_2+\Big(\frac{2j}{2}\Big)\,e_2\cr
&=\frac{n}{2}\,e_2\cr
&=ke_2\cr
&=0.
\end{split}
\]

Now let $i$ be odd. 

\[
\begin{split}
e_i \rt e_j&=-e_{2j-i}+e_{n-i}+e_{2j}\cr
&=-\lfloor \frac{2j-i}{2}\rfloor \,e_2-e_1+\lfloor \frac{n-i}{2}\rfloor\,e_2+e_1+\Big(\frac{2j}{2}\Big)\,e_2\cr
&=-\lfloor \frac{2j-i}{2}\rfloor \,e_2+\lfloor \frac{n-i}{2}\rfloor\,e_2+j\,e_2\cr
&=-\Big( \frac{2j-i-1}{2}\Big) \,e_2+\Big( \frac{n-i-1}{2}\Big)\,e_2+j\,e_2\cr
&=\frac{n}{2}\,e_2\cr
&=ke_2\cr
&=0.
\end{split}
\]
 
Therefore we have 
 $$\Delta(R_n) / \Delta^2(R_n)\cong \mathbb{Z}\oplus \mathbb{Z}_{k}.$$
 
\end{proof}

\noindent
\textbf {Acknowledgement:}
The authors would like to thank Valeriy G. Bardakov and Mahender Singh for their comments which improved the first version of the paper. We'd also like to thank the referee for useful comments. Thanks are also due to Anthony Curtis for help with computer programming and Mohamed Elbehiry for the invaluable discussions.

%
%
\clearpage

\section*{Appendix}
  
Here we present some patterns in multiplication tables for $\Delta(R_n)$ considering two cases: $n\equiv 0\pmod{4}$ and $n\equiv 2\pmod{4}$. We also give two examples to elaborate following equations. 

\textbf{Case 1.} $n\equiv 0\pmod{4}$. 

Consider the integral quandle ring of the dihedral quandle $R_n= \{a_0, a_1, a_2, \ldots, a_{n-1}\}$. Let $e_1= a_1-a_0,\,e_2=a_2-a_0,\,\ldots , e_{n-1}=a_{n-1}-a_0$.
Then  $\Delta(R_n)= \langle e_1, e_2, \ldots, e_{n-1} \rangle $. To determine $\Delta^2(R_n)$, we compute the products $e_i \rt e_j$. Then we have the following. 

$$e_{2i} \rt e_i=-e_{2i}-e_{n-2i},\,\,\,\,\textnormal{for}\,\,1\leq i\leq \frac{n}{4}$$
$$e_{n-2i} \rt e_i=-2e_{2i}+e_{4i},\,\,\,\,\textnormal{for}\,\,1\leq i\leq \frac{n}{4}-1$$
$$e_{2i} \rt e_{\frac{n}{2}-i}=e_{n-4i}-2e_{n-2i},\,\,\,\,\textnormal{for}\,\,1\leq i\leq \frac{n}{4}-1$$
$$e_{i} \rt e_{\frac{n}{4}}=-e_{n-i}-e_{\frac{n}{2}}+e_{\frac{n}{2}+n-i},\,\,\,\,\textnormal{for}\,\,\frac{n}{2}+1\leq i\leq n-1$$
$$e_{i} \rt e_{\frac{n}{4}}=e_{\frac{n}{2}-i}-e_{\frac{n}{2}}-e_{n-i},\,\,\,\,\textnormal{for}\,\,\,1\leq i\leq \frac{n}{2}-1$$
$$e_{i}\rt e_{\frac{n}{2}}=0,\,\,\,\,\textnormal{for all}\,\,\,i.$$

We give an example when $n=8$. 

\begin{center}
\tiny
\begin{tabular}{|c||c|c|c|c|c|c|c|}
  \hline
 $\cdot$ & $e_1$ & $e_2$ & $e_3$ & $e_4$ & $e_5$ & $e_6$ & $e_7$\\
   \hline
     \hline
$e_1$ & $$ & $e_3-e_4-e_7$ & $$ & $0$ & $$ & $$ & $$\\
  \hline
$e_2$  & $-e_2-e_6$ & $e_2-e_4-e_6$ & $e_4-2e_6$ & $0$ & $-e_2-e_6$ & $$ & $e_4-2e_6$\\
  \hline
$e_3$ & $$ & $e_1-e_4-e_5$ & $$ & $0$ & $$ & $$ & $$\\
  \hline
$e_4$ & $$ & $-2e_4$ & $$ & $0$ & $$ & $-2e_4$ & $$\\
  \hline
$e_5$ & $$ & $-e_3-e_4+e_7$ & $$ & $0$ & $$ & $$ & $$\\
  \hline
$e_6$ & $-2e_2+e_4$ & $-e_2-e_4+e_6$ & $-e_2-e_6$ & $0$ & $-2e_2+e_4$ & $$ & $-e_2-e_6$\\
  \hline
$e_7$ & $$ & $-e_1-e_4+e_5$ & $$ & $0$ & $$ & $$ & $$\\
  \hline
\end{tabular}\\
\end{center}

Note that $i$-th column $=$ $(\frac{n}{2}+i)$th column. 

\textbf{Case 2.} $n\equiv 2\pmod{4}$. 

Similar computations to Case 1 yield the following. 

$$e_{2i}\rt e_i=-e_{2i}-e_{n-2i},\,\,\,\,\textnormal{for}\,\,1\leq i\leq \lfloor \frac{n}{4} \rfloor$$
$$e_{n-2i}\rt e_i=-2e_{2i}+e_{4i},\,\,\,\,\textnormal{for}\,\,1\leq i\leq \lfloor \frac{n}{4} \rfloor$$
$$e_{2i}\rt e_{\frac{n}{2}-i}=e_{n-4i}-2e_{n-2i},\,\,\,\,\textnormal{for}\,\,1\leq i\leq \lfloor \frac{n}{4} \rfloor$$
$$e_{i}\rt e_{\frac{n}{2}}=0,\,\,\,\,\textnormal{for all}\,\,\,i$$
$$e_{1}\rt e_{1}=e_1-e_2-e_{n-1}$$
$$e_{n-1}\rt e_{1}=-e_1-e_2+e_{3}$$
\begin{equation}\label{neranga1}
e_{i}\rt e_{1}=-e_{2}-e_{n-i}+e_{n-i+2},\,\,\,\,\textnormal{for}\,\, 3\leq i\leq n-3.
\end{equation}

We give an example when $n=10$. 

\begin{center}
\tiny
\begin{tabular}{|c||c|c|c|c|c|c|c|c|c|}
  \hline
 $\cdot$ & $e_1$ & $e_2$ & $e_3$ & $e_4$ & $e_5$ & $e_6$ & $e_7$ & $e_8$ & $e_9$\\
   \hline
$e_1$ & $e_1-e_2-e_9$ & $$ & $$ & $$ & $0$ & $$  & $$ & $$ & $$ \\
  \hline
$e_2$  & $-e_2-e_8$ & $$ & $$ & $e_6-2e_8$ & $0$ & $-e_2-e_8$ & $$  & $$ & $e_6-2e_8$\\
  \hline
$e_3$ & $-e_2-e_7+e_9$ & $$ & $$ & $$ & $0$ & $$ & $$ & $$ & $$ \\
  \hline
$e_4$ & $-e_2-e_6+e_8$ & $-e_4-e_6$ & $e_2-2e_6$ & $$  & $0$ & $$ & $-e_4-e_6$ & $e_2-2e_6$ & $$ \\
  \hline
$e_5$ & $-e_2-e_5+e_7$ & $$ & $$ & $$ & $0$ & $$ & $$ & $$ & $$\\
  \hline
  $e_6$ & $-e_2-e_4+e_6$ & $-2e_4+e_8$ & $-e_4-e_6$ & $$ & $0$ & $$ & $-2e_4+e_8$ & $-e_4-e_6$ & $$\\
  \hline
  $e_7$ & $-e_2-e_3+e_5$ & $$ & $$ & $$ & $0$ & $$ & $$ & $$ & $$\\
  \hline
  $e_8$ & $-2e_2+e_4$ & $$ & $$ & $-e_2-e_8$ & $0$ & $-2e_2+e_4$ & $$ & $$ & $-e_2-e_8$\\
  \hline
  $e_9$ & $-e_1-e_2+e_3$ & $$ & $$ & $$ & $0$ & $$ & $$ & $$ & $$\\
  \hline
\end{tabular}\\
\end{center}

Note that $i$-th column $=$ $(\frac{n}{2}+i)$th column. 
\end{document}